\theoremstyle{plain}
\newtheorem{corollary}{Corollary}
\newtheorem{assertion}{Assertion}
\newtheorem{theorem}{Theorem}
\newtheorem{lemma}{Lemma}
\theoremstyle{definition}
\newtheorem{definition}{Definition}
\newtheorem{remark}{Remark}
\newtheorem{assumption}{Assumption}
\begin{document}

\title{Invariant foliations of non-degenerate bi-Hamiltonian structures}
\author{I.\,K.~Kozlov}
\thanks{This work was supported by the Russian Foundation for Basic Research (grant no. 13-01-00664a) and the program ``Leading Scientific Schools'' (grant no. NSh-581.2014.1)}
\address{No Affiliation, Moscow, Russia. \ {\bf Email:}  ikozlov90@gmail.com}
\date{}

\begin{abstract}  In this paper, we describe all invariant distributions of non-degenerate bi-Hamiltonian structures and investigate their integrability in the  neighbourhood of a generic point. \end{abstract}

\maketitle

\section{Introduction and main results}

In this paper, we investigate the integrability of certain distributions, which naturally arise when considering
pairs of compatible non-degenerate Poisson brackets on real and
complex manifolds.

\begin{assumption} In the real case, all objects in this paper (manifolds, differential forms, etc.) are assumed to be smooth (of class $C^{\infty} $). In the complex case, all objects under consideration are complex analytic.  \end{assumption}

\begin{definition}  A pair of differential  $2$-forms $(\omega_0,
\omega_1)$  on a manifold $M$ is called
\emph{compatible} if the following holds:

\begin{enumerate}

\item The form $\omega_0$ is non-degenerate.

\item Both forms $\omega_0$ and $\omega_1$ are closed: \[d\omega_0 = 0 , \qquad d \omega_1 =0. \]

\item The Nijenhuis tensor of the field of endomorphisms $P = \omega_{0}^{-1} \omega_{1}$
is equal to zero: \[N_P =0. \]
\end{enumerate} \end{definition}

Recall that the Nijenhuis tensor $N_P$ of a field of endomorphisms $P$ is  given by the formula:
\[N_P (X, Y) = [PX, PY] - P [PX, Y] - P[X, PY] + P^2 [X, Y], \]
for any vector fields $X$ and $Y$.

Two symplectic structures on a manifold $M$ are compatible if and only if the corresponding
Poisson brackets are compatible (i.e. they define a bi-Hamiltonian structure).  Thus a pair of compatible $2$-forms is also called \textbf{a non-degenerate bi-Hamiltonian structure}. For more about bi-Hamiltonian structures see, e.g.,\cite{BolsinovIzosimomKonyaevOshemkov12} and the references therein.

In this paper we examine the invariant distributions, which are defined as follows.

\begin{definition}  Let $A$ and $B$ be two skew-symmetric bilinear forms on a linear
space $V$. We call a subspace $W$ of the linear space $V$
\textbf{invariant} if it is invariant with respect to the action of the group
of automorphisms $\textnormal{Aut}(V, A, B)$ consisting of all linear transformations which preserve both forms $A$ and $B$. \end{definition}

Sometimes, instead of a pair of forms on a linear space we will consider a pair consisting of a bilinear form  $B$  and an operator $P$. Here we identify the pair $(B, P)$  with the pair of bilinear forms $(B \circ P, B)$.

\begin{definition} Consider a  pair of compatible $2$-forms $(\omega_0, \omega_1)$ on a
manifold $M$. We call a distribution $F$ on the manifold $M$ \emph{invariant}
if every subspace $F_x$  is an invariant subspace of the corresponding
the tangent space $(T_x M, \omega_0, \omega_1)$.
\end{definition}

In this paper, we investigate the integrability of invariant distributions in the  neighbourhood of a generic point. The problem of
integration of invariant distributions was posed in \cite{BolsinovIzosimomKonyaevOshemkov12}. On the one hand, this problem can be viewed as a question about the local structure of compatible Poisson brackets and as a continuation of the research of I.\,S.~Zakharevich \cite{Zakharevich01},  A.~Panasyuk \cite{Panasyuk00} and  F.~Turiel \cite{Turiel94, Turiel10, Turiel11}.  On the other hand, this problem is related to the question of integrability of bi-Hamiltonian systems and the generalized Mishchenko--Fomenko hypothesis about existence of a set of functions in bi-involution for the pencil of argument shift method on Lie coalgebras (see \cite{BolsinovIzosimomKonyaevOshemkov12}). One of the most effective current methods of constructing integrals for bi-Hamiltonian systems on Lie algebras --- the argument shift method --- was developed by A.\,S.~Mishchenko and A.\,T.~Fomenko in the papers \cite{MishchenkoFomenko78GeneralizedLiouville, MishchenkoFomenko78EulerEquations}. Nevertheless, the prove of the generalized Mishchenko--Fomenko hypothesis requires some additional considerations ---  the criterion for the completeness of a family of functions that is constructed by the argument shift method  was obtained by A.\,V.~Bolsinov in \cite{Bolsinov88, Bolsinov92} (for more details about the argument shift method and its possible generalizations see also \cite{Trofimov95}). Therefore, in this paper we try to approach the question of integrability of bi-Hamiltonian systems from the other side. Namely, in this paper we search for foliations that can be described in terms of a bi-Hamiltonian structure or, more generally, for integrable distributions that are naturally associated with a bi-Hamiltonian structure.

Unfortunately, at this moment, singularities of bi-Hamiltonian structures are still virtually not investigated. Thus we restrict ourselves to a certain class of generic points (these points will be called regular). In order to define the set of
points of non-degenerate bi-Hamiltonian structures under consideration we need the following two theorems from linear algebra.

\begin{theorem}[Jordan--Kronecker theorem]
\label{T:Jordan-Kronecker_theorem} 
Let  $A$ and $B$ be two skew-symmetric bilinear forms on a finite-dimensional vector
space $V$ over a field $\mathbb{K}$.  If the field  $\mathbb{K}$ is algebraically closed, then there exists a basis in the space $V$ such that the matrices of both forms  $A$ and $B$ are block-diagonal
matrices:

\begin{equation}\label{Eq:Jordan-Kronecker-form} {\footnotesize A =
\begin{pmatrix}
A_1 &     &        &      \\
    & A_2 &        &      \\
    &     & \ddots &      \\
    &     &        & A_k  \\
\end{pmatrix}
\quad  B=
\begin{pmatrix}
B_1 &     &        &      \\
    & B_2 &        &      \\
    &     & \ddots &      \\
    &     &        & B_k  \\
\end{pmatrix}
}
\end{equation} where each pair of the corresponding blocks $A_i$ and $B_i$  has one of the
the following types:

\begin{enumerate}
\item Jordan block with eigenvalue $\lambda \in
\mathbb{K}$ {\scriptsize  \[A_i =\left(
\begin{array}{c|c}
  0 & \begin{matrix}
   \lambda &1&        & \\
      & \lambda & \ddots &     \\
      &        & \ddots & 1  \\
      &        &        & \lambda   \\
    \end{matrix} \\
  \hline
  \begin{matrix}
  -\lambda  &        &   & \\
  -1   & -\lambda &     &\\
      & \ddots & \ddots &  \\
      &        & -1   & -\lambda \\
  \end{matrix} & 0
 \end{array}
 \right)
\quad  B_i= \left(
\begin{array}{c|c}
  0 & \begin{matrix}
    1 & &        & \\
      & 1 &  &     \\
      &        & \ddots &   \\
      &        &        & 1   \\
    \end{matrix} \\
  \hline
  \begin{matrix}
  -1  &        &   & \\
     & -1 &     &\\
      &  & \ddots &  \\
      &        &    & -1 \\
  \end{matrix} & 0
 \end{array}
 \right)
\]} \item Jordan block with eigenvalue $\infty$ {\scriptsize \[
A_i = \left(
\begin{array}{c|c}
  0 & \begin{matrix}
   1 & &        & \\
      &1 &  &     \\
      &        & \ddots &   \\
      &        &        & 1   \\
    \end{matrix} \\
  \hline
  \begin{matrix}
  -1  &        &   & \\
     & -1 &     &\\
      &  & \ddots &  \\
      &        &    & -1 \\
  \end{matrix} & 0
 \end{array}
 \right)
\quad B_i = \left(
\begin{array}{c|c}
  0 & \begin{matrix}
    0 & 1&        & \\
      & 0 & \ddots &     \\
      &        & \ddots & 1  \\
      &        &        & 0   \\
    \end{matrix} \\
  \hline
  \begin{matrix}
  0  &        &   & \\
  -1   & 0 &     &\\
      & \ddots & \ddots &  \\
      &        & -1   & 0 \\
  \end{matrix} & 0
 \end{array}
 \right)
 \] } \item  Kronecker block {\scriptsize \[ A_i = \left(
\begin{array}{c|c}
  0 & \begin{matrix}
   1 & 0      &        &     \\
      & \ddots & \ddots &     \\
      &        & 1    &  0  \\
    \end{matrix} \\
  \hline
  \begin{matrix}
  -1  &        &    \\
  0   & \ddots &    \\
      & \ddots & -1 \\
      &        & 0  \\
  \end{matrix} & 0
 \end{array}
 \right) \quad  B_i= \left(
\begin{array}{c|c}
  0 & \begin{matrix}
    0 & 1      &        &     \\
      & \ddots & \ddots &     \\
      &        &   0    & 1  \\
    \end{matrix} \\
  \hline
  \begin{matrix}
  0  &        &    \\
  -1   & \ddots &    \\
      & \ddots & 0 \\
      &        & -1  \\
  \end{matrix} & 0
 \end{array}
 \right)
 \] }
 \end{enumerate}

Each Kronecker block is a $(2k_i+1) \times (2k_i+1)$   block, where  $k_i \geq 0$.  In particular, if  $k_i=0$, then  $A_i$ and $B_i$ 
are two $1\times 1$ zero matrices
\[ A_i =
\begin{pmatrix}
0
\end{pmatrix} \quad  B_i=
\begin{pmatrix}
0
\end{pmatrix}
\]
\end{theorem}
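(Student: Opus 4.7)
The plan is to follow the classical Weierstrass--Kronecker strategy for matrix pencils, adapted to the skew-symmetric setting. I would consider the pencil of forms $\omega_\lambda = A - \lambda B$ (with $\lambda \in \mathbb{K} \cup \{\infty\}$, where $\infty$ corresponds to $B$ itself) and decompose $V$ into a direct sum of subspaces each of which is simultaneously $A$- and $B$-invariant (in the sense that its skew-orthogonal complement with respect to either form matches up correctly), and on each of which the restricted pair $(A,B)$ is isomorphic to one of the three model blocks. The decomposition should split $V$ into a ``singular'' part, responsible for the Kronecker blocks, and a ``regular'' part, where $\det(A - \lambda B)$ is not identically zero, which will give the Jordan blocks.

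To extract the Kronecker part, I would study the generic kernel
\[ L = \bigcap_{\lambda} \ker(A - \lambda B). \]
More usefully, I would consider chains of subspaces obtained by alternately applying $A$ and $B$ starting from a vector in $\ker A \cap \ker B$: given $v_0 \in \ker A$, one solves $A v_1 = B v_0$, then $A v_2 = B v_1$, and so on, until the recursion terminates; such a chain spans a $(2k+1)$-dimensional subspace on which $(A,B)$ takes the Kronecker normal form. By carefully choosing a basis of the singular subspace that is compatible with both forms (using a minimal generating set for the singular chains, in the spirit of the Kronecker minimal indices), one peels off all Kronecker blocks and passes to a complement.

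On the regular part, the form $\omega_\lambda$ is non-degenerate for generic $\lambda$, so after shifting $\lambda$ if necessary one may assume $B$ is non-degenerate on this subspace. Then the operator $P = B^{-1} A$ is well-defined, and one verifies that $P$ is \emph{self-adjoint} with respect to the skew-symmetric form $B$, i.e.\ $B(Px, y) = B(x, Py)$. Using the Jordan decomposition of $P$ over the algebraically closed field $\mathbb{K}$, one splits the regular part into generalised eigenspaces. On each generalised eigenspace, the $B$-self-adjointness of $P$ forces Jordan blocks to occur in pairs of equal size which are $B$-dual to one another; choosing Jordan bases in these dual pairs simultaneously produces exactly the finite-$\lambda$ Jordan block in the statement. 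The eigenvalue $\infty$ is handled symmetrically by swapping the roles of $A$ and $B$ (equivalently, working with $A^{-1}B$).

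The main obstacle I expect is the clean separation of the singular and regular parts: the Kronecker chains need not be unique, and naive complements of $L$ may fail to be invariant under either form, so the construction of a skew-orthogonal complement that truly carries a regular pencil requires some work --- in particular showing that the singular summand can be chosen so that its skew-orthogonal complement (with respect to any generic $\omega_\lambda$) is independent of $\lambda$ and splits off as a direct summand. The second delicate point is verifying, on each generalised eigenspace in the regular part, that the $B$-dual pairing of Jordan blocks is compatible with a simultaneous basis putting $A$ and $B$ into the exact block forms displayed in \eqref{Eq:Jordan-Kronecker-form}; this reduces to a finite-dimensional normal form computation for a nilpotent $B$-self-adjoint operator, which I would carry out by induction on the size of the largest Jordan block.
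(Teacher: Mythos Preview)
The paper does not actually prove this theorem. Immediately after stating it (and its real analogue), the author writes that ``the proof of the Jordan--Kronecker theorem can be found in \cite{Gurevich50} and \cite{Thompson91}'' and refers to \cite{Gantmaher88} and \cite{KozlovIK13} for further details. There is therefore no argument in the paper to compare your proposal against; the theorem is quoted purely as background.

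That said, your outline is the classical Weierstrass--Kronecker strategy and is the one used in the references the paper cites. Splitting off the singular part via minimal-degree polynomial solutions of $(A-\lambda B)v(\lambda)=0$ to produce the Kronecker blocks, then passing to the regular part where some $A-\lambda B$ is non-degenerate and analysing the self-adjoint operator $P=B^{-1}A$ eigenvalue by eigenvalue, is exactly how those proofs go. One small caution on your description of the Kronecker chains: starting from an arbitrary $v_0\in\ker A$ and trying to solve $Av_1=Bv_0$ need not succeed, since $Bv_0$ may fail to lie in $\operatorname{Im}A$; the robust way is to work directly with polynomial kernel vectors of minimal degree (the minimal indices you mention), which automatically give chains of the correct length and guarantee the existence of a complement carrying a regular pencil. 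The two obstacles you flag --- the skew-orthogonal splitting off of the singular summand and the simultaneous normal form for a nilpotent $B$-self-adjoint operator --- are indeed the substantive steps, and your plan to handle the latter by induction on the size of the largest Jordan block is the standard route.
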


There exists a natural real analogue of the Jordan--Kronecker theorem. 

\begin{theorem}[Real Jordan--Kronecker theorem]
\label{T:Real_Jordan_Kronecker} Any two skew-symmetric bilinear forms $A$ and $B$ on a real finite-dimensional vector
space $V$ can be reduced simultaneously to a block-diagonal form such that each block is either
a Kronecker block or a Jordan block with eigenvalue $\lambda \in \mathbb{R} \cup \{\infty\}$ or a real Jordan block with
complex eigenvalue $\lambda = \alpha + i \beta$:

{\scriptsize \[ A_i =\left(
\begin{array}{c|c}
  0 & \begin{matrix}
   \Lambda &E&        & \\
      & \Lambda & \ddots &     \\
      &        & \ddots & E  \\
      &        &        & \Lambda   \\
    \end{matrix} \\
  \hline
  \begin{matrix}
  - \Lambda  &        &   & \\
  - E   & - \Lambda &     &\\
      & \ddots & \ddots &  \\
      &        & - E   & - \Lambda \\
  \end{matrix} & 0
 \end{array}
 \right)
\quad  B_i= \left(
\begin{array}{c|c}
  0 & \begin{matrix}
    E & &        & \\
      & E &  &     \\
      &        & \ddots &   \\
      &        &        & E   \\
    \end{matrix} \\
  \hline
  \begin{matrix}
  - E  &        &   & \\
     & - E &     &\\
      &  & \ddots &  \\
      &        &    & - E \\
  \end{matrix} & 0
 \end{array}
 \right)
\]}

Here $\Lambda$ and $E$ are $2\times 2$ matrices: $\Lambda =
\begin{pmatrix} \alpha & -\beta \\ \beta & \alpha \\ \end{pmatrix}$ and $E = \begin{pmatrix} 1 & 0 \\ 0 & 1 \\ \end{pmatrix}
$. \end{theorem}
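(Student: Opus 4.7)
The plan is to derive the real classification from the complex Jordan--Kronecker theorem (Theorem~\ref{T:Jordan-Kronecker_theorem}) via complexification and then descent back to $V$ through the complex-conjugation involution.

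First, I would form the complexification $V_{\mathbb{C}} = V \otimes_{\mathbb{R}} \mathbb{C}$ and extend $A$ and $B$ $\mathbb{C}$-bilinearly to skew-symmetric forms $A_{\mathbb{C}}, B_{\mathbb{C}}$ on $V_{\mathbb{C}}$. Let $\sigma \colon V_{\mathbb{C}} \to V_{\mathbb{C}}$ denote the antilinear involution induced by complex conjugation, whose fixed-point set is precisely $V$. Since $A$ and $B$ are real, one has $A_{\mathbb{C}}(\sigma x, \sigma y) = \overline{A_{\mathbb{C}}(x,y)}$ and similarly for $B_{\mathbb{C}}$, so $\sigma$ intertwines the pair $(A_{\mathbb{C}}, B_{\mathbb{C}})$ with its complex conjugate.

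Next, I would apply Theorem~\ref{T:Jordan-Kronecker_theorem} to obtain a decomposition of $V_{\mathbb{C}}$ into $(A_{\mathbb{C}}, B_{\mathbb{C}})$-invariant subspaces of three types: Kronecker blocks, Jordan blocks with real eigenvalue $\lambda \in \mathbb{R} \cup \{\infty\}$, and Jordan blocks with non-real eigenvalue $\lambda = \alpha + i\beta$, $\beta \neq 0$. Because the multiset of block types, sizes and eigenvalues is an isomorphism invariant of the pair of forms, and $\sigma$ maps this decomposition to one with eigenvalues conjugated, the Kronecker blocks and the real-eigenvalue Jordan blocks can be grouped into $\sigma$-stable summands by themselves, while non-real Jordan blocks appear in conjugate pairs $\{\lambda, \bar\lambda\}$. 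I would then rearrange the summands into $\sigma$-stable pieces: each real-eigenvalue Jordan block and each Kronecker block chosen individually $\sigma$-stable, each complex-eigenvalue block paired with the $\sigma$-image of its span.

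Finally, on each $\sigma$-stable piece I would exhibit a real basis realizing the claimed normal form. On a $\sigma$-invariant Kronecker block or Jordan block with real eigenvalue, the standard fact that a $\sigma$-invariant complex subspace on which $\sigma$ acts as conjugation admits a $\sigma$-fixed basis lets one transport the complex canonical basis of Theorem~\ref{T:Jordan-Kronecker_theorem} into a basis in $V$, in which $A$ and $B$ retain the same matrices and hence land in the real list. On a pair of Jordan blocks with eigenvalues $\lambda = \alpha + i\beta$ and $\bar\lambda$, I would take a complex Jordan--Kronecker basis $e_1,\dots,e_n,f_1,\dots,f_n$ of the $\lambda$-block and its $\sigma$-image $\sigma e_1,\dots,\sigma f_n$ as canonical basis of the $\bar\lambda$-block; then the real and imaginary parts $\operatorname{Re} e_j, \operatorname{Im} e_j, \operatorname{Re} f_j, \operatorname{Im} f_j$ lie in $V$, and a routine change-of-basis computation converts the block-diagonal complex matrices into the single claimed real block with matrices $\Lambda = \bigl(\begin{smallmatrix} \alpha & -\beta \\ \beta & \alpha \end{smallmatrix}\bigr)$ and $E = \bigl(\begin{smallmatrix} 1 & 0 \\ 0 & 1 \end{smallmatrix}\bigr)$.

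The principal obstacle is the bookkeeping in the grouping step: the complex Jordan--Kronecker decomposition is only unique up to permutations and isomorphisms of summands, so one must select representatives carefully so that the family of summands is stable under $\sigma$ and the real-eigenvalue (and Kronecker) summands are individually $\sigma$-invariant. Once this is set up, the real-eigenvalue and Kronecker cases reduce to picking a real basis of a $\sigma$-invariant complex space, and the complex-eigenvalue case is the routine matrix calculation above.
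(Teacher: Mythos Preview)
The paper does not prove this theorem: it is stated as a known fact from linear algebra, with the proof deferred to the cited references (Gurevich, Thompson, Gantmakher, and the author's own \cite{KozlovIK13}). So there is no ``paper's own proof'' to compare against.

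Your complexification-and-descent outline is the standard route and is correct in spirit. One point deserves more care than your final paragraph suggests. Knowing that the multiset of block types is $\sigma$-invariant tells you only that $\sigma$ carries one Jordan--Kronecker decomposition of $V_{\mathbb C}$ to another one with the same combinatorics; it does not by itself hand you a decomposition in which each real-eigenvalue (or Kronecker) summand is literally $\sigma$-stable. To get that you either rerun the construction of the canonical form $\sigma$-equivariantly, or you argue via the automorphism group of the pair (a unipotent-by-reductive group over $\mathbb R$) that any two decompositions are related by an automorphism and then average. Likewise, once a real-eigenvalue summand $W$ is $\sigma$-stable, what you actually know is that $W=(W\cap V)\otimes\mathbb C$; you still have to check that the real pair $(A|_{W\cap V},B|_{W\cap V})$ admits a \emph{real} canonical basis, not merely that some complex canonical basis exists on $W$. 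This last step is easy (e.g.\ via the real Jordan form of $P=B^{-1}A$ restricted to $W\cap V$), but ``transport the complex canonical basis'' is not quite the right description. With these two points filled in, your argument goes through.
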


The proof of the Jordan--Kronecker theorem can be found in  \cite{Gurevich50} and \cite{Thompson91} (note that the proof in  \cite{Thompson91} is based on the results from the book  \cite{Gantmaher88}). For more details about the Jordan--Kronecker theorem see also  \cite{KozlovIK13}.

The pair of block-diagonal matrices \eqref{Eq:Jordan-Kronecker-form} consisting of Jordan and Kronecker blocks will be called the
Jordan--Kronecker form of the pair of forms $A$ and $B$. The Jordan--Kronecker form of the pair of forms $A$ and $B$ is uniquely defined up to a permutation of blocks.

If one of the forms is non-degenerate, then there is no Kronecker blocks in the Jordan--Kronecker form.

\begin{definition}
Let $(\omega_0,\omega_1)$ be a pair of compatible  $2$-forms on a
manifold $M$. A point $x_0 \in M$ will be called \emph{regular} if in some of its  neighbourhood $Ox_0$  the following Jordan--Kronecker invariants 
are constant:

\begin{itemize}

\item the number of distinct eigenvalues  $\lambda_i$ of the operators \[P_x : T_x M \to T_x M
,\]

\item the number and sizes of Jordan blocks for each eigenvalue $\lambda_i$.
\end{itemize} \end{definition}

Regular points can also be described as follows. Recall that the local frame is a set of vector fields defined in a  neighbourhood of a point of a manifold that are linearly independent at each point of this  neighbourhood.

\begin{remark} A point $x_0 \in M$  is regular if and only if in a  neighbourhood of this point there exists a local frame $v_1(x), \dots,
v_n(x)$  such that the matrices of both forms $\omega_0$ and $\omega_1$ have the block-diagonal form as in the  Jordan--Kronecker theorem \[{\footnotesize A =
\begin{pmatrix}
A_1 &     &        &      \\
    & A_2 &        &      \\
    &     & \ddots &      \\
    &     &        & A_k  \\
\end{pmatrix},
\quad  B=
\begin{pmatrix}
B_1 &     &        &      \\
    & B_2 &        &      \\
    &     & \ddots &      \\
    &     &        & B_k  \\
\end{pmatrix},
}\] where each eigenvalue $\lambda_i(x)$ depends on the point of the manifold: \begin{equation} \renewcommand*{\arraystretch}{1.2} A_i = \left(\begin{array}{c|c} 0 & J(\lambda_i(x)) \\
\hline - J^{T}(\lambda_i(x)) &
0\end{array} \right), \quad  B_i = \left(\begin{array}{c|c} 0 & E \\
\hline - E& 0\end{array} \right).\end{equation} \end{remark}

We now state the main results. The following two theorems allow us to reduce the problem of integration of invariant distributions to the case of one eigenvalue in the complex case or to the case of one eigenvalue or two complex conjugate eigenvalues in the real case.

\begin{theorem}[F.~Turiel, \cite{Turiel94}]\label{T:Eigenvalue_Strong_Decomposition}
Let  $(\omega_0, \omega_1)$  be a pair of compatible $2$-forms on a  manifold $M$. Then any regular point $x_0 \in (M, \omega_0, \omega_1)$  has a  neighbourhood $Ox$ isomorphic to a direct product of manifolds equipped with pairs of compatible $2$-forms:
\[(Ox, \omega_0, \omega_1) =  \prod (O_i x, \omega_{0, i}, \omega_{1, i}), \] where the characteristic polynomial of each pair of forms  $(\omega_{0, i}, \omega_{1, i})$ can not be decomposed into a product of two non-trivial
mutually prime polynomials. \end{theorem}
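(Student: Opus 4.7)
The plan is to exploit the vanishing Nijenhuis tensor of the recursion operator $P = \omega_0^{-1}\omega_1$ to produce the decomposition as a product foliation. At a regular point $x_0$ the characteristic polynomial of $P_{x_0}$ factors over the base field as $\chi(t) = \prod_{i=1}^{s} p_i(t)^{m_i}$ with pairwise distinct irreducible factors $p_i$. Regularity makes the multiplicities $m_i$ and the Jordan structure on each factor locally constant on some neighbourhood $Ox_0$, so the $p_i$ vary smoothly in $x$ while remaining pairwise coprime throughout $Ox_0$. Accordingly, I would introduce the generalized eigendistributions
\[ E_i(x) = \ker p_i(P_x)^{m_i}, \qquad i = 1, \dots, s, \]
which are smooth of constant rank and, by the primary decomposition applied fibrewise, give the pointwise direct sum $T_x M = E_1(x) \oplus \dots \oplus E_s(x)$.

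The core step is to show that each $E_i$ is involutive. This is the classical Nijenhuis integrability lemma: for a Nijenhuis operator of locally constant spectral type, any subbundle cut out as the generalized eigenspace of an isolated cluster of eigenvalues is closed under the Lie bracket. I expect this to be the main technical obstacle, since the coefficients of $p_i$ are in general non-constant functions of $x$; one must combine the identity $N_P = 0$ with the fact that these coefficients are elementary symmetric functions of the eigenvalues of $P$ (and are therefore constants of motion along the distribution $E_i$) to close the bracket computation. Once involutivity is established, the Frobenius theorem produces a local foliation tangent to each $E_i$; because the $E_i$ are pairwise transverse and together span $TM$, these foliations fit together into a local product $Ox_0 = \prod_i O_i x_0$ with $T(O_i x_0) = E_i$.

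It remains to check that $\omega_0$ and $\omega_1$ respect this product decomposition, i.e.\ that $\omega_\alpha(E_i, E_j) = 0$ for $i \ne j$ and $\alpha \in \{0,1\}$. This follows from the self-adjointness $\omega_\alpha(PX, Y) = \omega_\alpha(X, PY)$ (an immediate consequence of $P = \omega_0^{-1}\omega_1$ together with the skew-symmetry of $\omega_0$ and $\omega_1$): if $X \in E_i$ and $Y \in E_j$, the invertibility of $p_i(P)^{m_i}$ on $E_j$ lets us write $Y = p_i(P)^{m_i} Z$, whence $\omega_\alpha(X, Y) = \omega_\alpha(p_i(P)^{m_i} X, Z) = 0$. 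The restricted forms $\omega_{\alpha, i} = \omega_\alpha|_{O_i x_0}$ are then closed, non-degenerate, and inherit vanishing Nijenhuis tensor from the ambient structure, so each factor $(O_i x_0, \omega_{0,i}, \omega_{1,i})$ is a non-degenerate bi-Hamiltonian structure whose recursion operator has characteristic polynomial $p_i^{m_i}$ — a power of a single irreducible polynomial, which is precisely the statement that it cannot be written as a product of two non-trivial mutually prime polynomials.
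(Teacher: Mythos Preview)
Your outline is essentially the standard argument and is correct in substance. Note, however, that the paper does not prove this theorem at all: it is attributed to Turiel, and the only remark offered is that it is a direct consequence of the two-factor splitting (Theorem~\ref{T:CompSymp_Characteristic_Decomposition}), applied inductively. Your approach and the paper's one-line reduction are thus really the same argument viewed from two ends --- you decompose into all irreducible factors at once, while the paper peels them off one at a time via the cited two-factor result.

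Two small points are worth tightening. First, involutivity of each $E_i$ together with pairwise transversality and $\bigoplus_i E_i = TM$ does \emph{not} by itself force a local product chart; one needs in addition that each complementary sum $\bigoplus_{j\neq i} E_j$ is involutive, so that Frobenius can be iterated. In your situation this holds for free, since $\bigoplus_{j\neq i} E_j$ is again a generalized eigendistribution (for the complementary factor $\chi/p_i^{m_i}$) and the same Nijenhuis lemma applies --- this is exactly the content of the two-factor theorem the paper invokes. Second, after establishing $\omega_\alpha(E_i,E_j)=0$ for $i\neq j$ you still need that the restriction $\omega_\alpha|_{E_i}$ is constant along the transverse leaves, so that $\omega_\alpha$ is genuinely a sum of pullbacks from the factors; this follows from $d\omega_\alpha=0$ by looking at the mixed components of the exterior derivative, but it should be said. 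Your parenthetical claim that the coefficients of $p_i$ are ``constants of motion along $E_i$'' is not needed and is not obviously true as stated; the involutivity follows directly from $N_P=0$ and the functoriality of the spectral projectors, without tracking how the eigenvalues vary.
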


\begin{theorem}\label{T:InvDistEigenDecomp}
Suppose that a manifold $M$ with a pair of compatible  $2$-forms $(\omega_0, \omega_1)$ on it decomposes into a direct product 
\[(M, \omega_0, \omega_1) = (M', \omega_0', \omega_1') \times (M'', \omega_0'', \omega_1''),\] where $(\omega_0', \omega_1')$ and $(\omega_0'', \omega_1'')$ are pairs of compatible of  $2$-forms on $M'$ and $M''$ respectively. If the characteristic polynomials $\chi'$ and $\chi''$ of the pairs of forms $(\omega_0', \omega_1')$ and $(\omega_0'', \omega_1'')$ are mutually prime (at each point of the manifold), then any invariant distribution $F$ on $M$ is a direct product of  invariant distributions $F'$ and $F''$ on $M'$ and $M''$ respectively. Also, the invariant distribution $F$ on $M$ is integrable if only if the corresponding distributions $F'$ on $M'$ and $F''$ on $M''$ are integrable.
\end{theorem}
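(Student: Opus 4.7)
The plan is to proceed in three steps: first a pointwise linear-algebra splitting, then a descent to distributions on each factor, and finally a standard integrability computation. At each point $x = (x', x'') \in M$ the tangent space decomposes as $T_x M = T_{x'} M' \oplus T_{x''} M''$ with both $\omega_0, \omega_1$ respecting this decomposition. The core linear-algebra lemma I would prove is this: given $(V, A, B) = (V', A', B') \oplus (V'', A'', B'')$ with characteristic polynomials $\chi', \chi''$ coprime, every $\mathrm{Aut}(V, A, B)$-invariant subspace $W$ decomposes as $W = W' \oplus W''$ with $W' \subseteq V'$ and $W'' \subseteq V''$ invariant in the respective factor. Two ingredients drive the lemma. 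The involution $\sigma = \mathrm{id}_{V'} \oplus (-\mathrm{id}_{V''})$ preserves both forms (the cross blocks of $A, B$ vanish), so it lies in $\mathrm{Aut}(V, A, B)$; since its $\pm 1$-eigenspaces are $V', V''$, any $\sigma$-stable $W$ satisfies $W = (W \cap V') \oplus (W \cap V'')$. Coprimality identifies $V', V''$ with the generalised eigenspaces of $P = A^{-1} B$ for disjoint spectra, so every $\mathrm{Aut}(V, A, B)$-element preserves $P$ and hence $V', V''$; this yields $\mathrm{Aut}(V, A, B) = \mathrm{Aut}(V', A', B') \times \mathrm{Aut}(V'', A'', B'')$, and the invariance of $W', W''$ in their respective factors follows.

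Applying the lemma pointwise gives $F = \widetilde F' \oplus \widetilde F''$ with $\widetilde F'_x \subseteq T_{x'} M'$ and $\widetilde F''_x \subseteq T_{x''} M''$, each invariant in the relevant factor. The nontrivial descent step is to show that $\widetilde F'_{(x', x'')}$ depends only on $x'$ (and symmetrically for the other factor), so that $\widetilde F'$ is the pullback of a distribution $F'$ on $M'$. Fixing $x_0' \in M'$, the smooth map $x'' \mapsto \widetilde F'_{(x_0', x'')}$ takes values in the set of invariant subspaces of the fixed triple $(T_{x_0'} M', \omega_0'(x_0'), \omega_1'(x_0'))$. At a regular point this set is expected to be finite: invariant subspaces are built combinatorially from iterated kernels $\ker(P' - \lambda_i \mathrm{id})^j$ via the Jordan--Kronecker structure. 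Combined with smoothness, this discreteness forces the map to be locally constant and hence constant on a connected neighbourhood. This descent / discreteness step is where I expect the main obstacle; it is essentially a classification question about invariant subspaces of a fixed pair of forms, logically separate from the involution argument.

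Once $F = F' \boxplus F''$ is realised as a direct product of distributions on the factors, integrability decouples by a standard Lie-bracket computation: local sections of $F$ split as pullbacks $\pi_{M'}^* X'$ with $X' \in \Gamma(F')$ and $\pi_{M''}^* X''$ with $X'' \in \Gamma(F'')$; mixed brackets $[\pi_{M'}^* X', \pi_{M''}^* X''] = 0$ vanish identically, while pure brackets satisfy $[\pi_{M'}^* X'_1, \pi_{M'}^* X'_2] = \pi_{M'}^* [X'_1, X'_2]$ (and analogously on $M''$). Hence $[F, F] \subseteq F$ if and only if both $[F', F'] \subseteq F'$ and $[F'', F''] \subseteq F''$, giving the integrability equivalence.
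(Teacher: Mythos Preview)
Your argument is correct and follows the same overall strategy as the paper, which disposes of the result in one line by saying it follows from Theorems~\ref{T:Eigenvalue_Strong_Decomposition} and~\ref{T:Linear_Inv_Decomp}. The linear-algebra lemma you prove is precisely the content of Theorem~\ref{T:Linear_Inv_Decomp} (specialised to two coprime factors rather than one factor per eigenvalue); your device of using the involution $\sigma = \mathrm{id}_{V'}\oplus(-\mathrm{id}_{V''})\in\mathrm{Aut}(V,A,B)$ to force the splitting $W=(W\cap V')\oplus(W\cap V'')$ is a clean concrete variant of the paper's observation that each $\mathrm{Aut}(V^{\lambda})$ embeds in $\mathrm{Aut}(V,B,P)$ acting trivially on the complementary summands. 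One point you handle with more care than the paper does: the descent step, showing that $\widetilde F'_{(x',x'')}$ is independent of $x''$, is never made explicit in the paper. Your discreteness argument---finitely many invariant subspaces of a fixed $(T_{x'_0}M',\omega_0',\omega_1')$ at a regular point, by the classification in Theorem~\ref{T:InvSubsJord} and Remark~\ref{Rem:InvSubsJord_Heights}, combined with smoothness of $F$---is the natural way to close this, and it does implicitly restrict the statement to regular points, which is consistent with the paper's scope. One minor slip: you write $P=A^{-1}B$, but in the paper's conventions $P=B^{-1}A=\omega_0^{-1}\omega_1$; this does not affect the argument.
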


The case of one and two complex conjugate eigenvalues is described by the following theorem.

\begin{theorem} \label{T:IntInvSubs_NonConstEigen}
Consider a pair of compatible forms  $\omega_0$, $\omega_1$ on $M$ with
one eigenvalue $\lambda$ or with one pair of complex conjugate eigenvalues $\alpha \pm i \beta$. Let $x_0$ be a regular point of  $M$. Suppose that at the point $x_0$ the corresponding Jordan--Kronecker decomposition of $(T_{x_0}M, \omega_0, \omega_1)$ consists of Jordan  $k_1 +1, k_2, \dots, k_n$-blocks, where $k_1 \geq k_2 \geq \dots \geq k_n$. Then there exists a  neighbourhood of the point $x_0$, in which all invariant distribution, except, maybe for $\textnormal{Ker} (P - \lambda E)^{k_i}$ and $\textnormal{Ker} (P^2 - 2 \alpha P + (\alpha^2 + \beta^2)E)^{k_i}$, where $i>1$, are integrable.  In the case of one eigenvalue $\lambda$ the distributions $\textnormal{Ker} (P - \lambda E)^{k_i}$, where $i>1$, are integrable at $x_0$ if and only if the eigenvalue $\lambda$ is constant in a  neighbourhood of the point  $x_0$. Similarly, in the case of a pair of complex conjugate eigenvalues $\alpha \pm i \beta$  the distributions $\textnormal{Ker} (P^2 - 2 \alpha P + (\alpha^2 + \beta^2)E)^{k_i}$, where $i>1$, are integrable at  the point  $x_0$ if and only if the functions $\alpha$ and $\beta$ are constant in a  neighbourhood of the point $x_0$.
\end{theorem}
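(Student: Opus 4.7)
The plan is to combine a linear-algebraic classification of invariant subspaces with a Frobenius calculation that uses both the Nijenhuis condition and the closedness of $\omega_0,\omega_1$. Since $T_{x_0}M$ decomposes into Jordan blocks with a single eigenvalue (or a single complex conjugate pair) by hypothesis, I would first describe the group $\textnormal{Aut}(T_{x_0}M,\omega_0,\omega_1)$ explicitly in the Jordan--Kronecker basis and deduce that every invariant subspace is obtained as a sum or intersection of the basic subspaces $\textnormal{Ker}(P-\lambda E)^{j}$ and $\textnormal{Im}(P-\lambda E)^{j}$ for $j=0,1,\dots,k_1+1$. The regularity hypothesis ensures that these pointwise subspaces fit together into smooth distributions in a neighbourhood $Ox_0$, so integrability of a general invariant distribution reduces to integrability of these basic ones.

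For the ``image'' distributions $\textnormal{Im}(P-\lambda E)^{j}$, I would invoke the standard fact that, for a Nijenhuis operator, every image subbundle of a power of $(P-fE)$ is involutive; this follows directly from $N_P=0$ by induction on $j$ together with $d\omega_0=0$, and does not require $\lambda$ to be locally constant. For those kernels $\textnormal{Ker}(P-\lambda E)^{j}$ with $j$ not among $k_2,\dots,k_n$, the kernel either coincides with an image of a complementary power inside the single top block, or decomposes as a sum of such an image with a union of entire Jordan blocks of size $\leq j$; in either situation integrability reduces to the image case.

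The delicate case is $j=k_i$ with $i>1$: here the kernel sits between two block-size thresholds and is not expressible in terms of images alone. For vector fields $X,Y$ tangent to $\textnormal{Ker}(P-\lambda E)^{k_i}$, I would compute the class of $[X,Y]$ modulo this kernel by expanding the Nijenhuis identity on $(P-\lambda E)^{k_i}X$ and $(P-\lambda E)^{k_i}Y$ and using $d\omega_0=d\omega_1=0$ to simplify the inner-product terms. The expected outcome is that the obstruction is a linear combination of $d\lambda(X)$ and $d\lambda(Y)$ paired with specific complementary vectors arising from the top Jordan block. This expression vanishes identically on a neighbourhood if and only if $d\lambda\equiv 0$, which is the stated criterion. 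Conversely, when $\lambda$ is locally constant, Turiel's canonical form for a bi-Hamiltonian structure with a single constant Jordan eigenvalue writes both $\omega_0$ and $\omega_1$ in explicit coordinates, and integrability of every basic distribution is then a direct coordinate computation. The real case with complex conjugate eigenvalues $\alpha\pm i\beta$ is handled by complexification: the kernel of $(P^2-2\alpha P+(\alpha^2+\beta^2)E)^{k_i}$ is the real part of $\textnormal{Ker}(P-\lambda E)^{k_i}\oplus\textnormal{Ker}(P-\bar\lambda E)^{k_i}$, and repeating the above Frobenius computation over $\mathbb{C}$ produces an obstruction controlled jointly by $d\alpha$ and $d\beta$.

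The main obstacle is the Frobenius computation for the borderline kernels in step three. Although the overall strategy is clear, the actual calculation involves a careful interplay of the Nijenhuis identity with the closedness of both $\omega_0$ and $\omega_1$, with many cross-cancellations between Jordan blocks of different sizes. The delicate point is to verify that the coefficient of $d\lambda$ in the obstruction is itself nonzero at $x_0$; this uses the regularity assumption to guarantee that the complementary vectors from the top Jordan block are genuinely independent of $\textnormal{Ker}(P-\lambda E)^{k_i}$, so that the vanishing of the obstruction truly forces the vanishing of $d\lambda$ rather than being an artefact of a degenerate pairing.
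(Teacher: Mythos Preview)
Your overall framework---classify the invariant subspaces linearly, then verify Frobenius---matches the paper, as do the complexification for the real case and the Nijenhuis argument for $\textnormal{Im}(P-\lambda E)$. However, two steps do not go through as written.

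The first gap is the reduction claim. You assert that integrability of a general invariant distribution ``reduces to integrability of the basic ones'' $\textnormal{Ker}(P-\lambda E)^j$ and $\textnormal{Im}(P-\lambda E)^j$, but invariant subspaces are \emph{sums} of pieces $\textnormal{Ker}(P-\lambda E)^{a}\cap\textnormal{Im}(P-\lambda E)^{b}$, and a sum of involutive distributions is not involutive in general, so knowing the basic ones are integrable does not finish the argument. Your treatment of the non-borderline kernels has a related flaw: for $j<k_n$ the kernel $\textnormal{Ker}(P-\lambda E)^j$ meets every Jordan block nontrivially and is neither an image nor ``an image in the top block plus whole small blocks''; and even in the range where such a description holds, the individual pieces depend on a noncanonical choice of Jordan splitting and are not themselves invariant (hence need not be smooth) distributions. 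The second gap is the borderline-kernel step: you describe only the \emph{expected} shape of the obstruction and never exhibit concrete sections $X,Y$ of $\textnormal{Ker}(P-\lambda E)^{k_i}$ with $[X,Y]\notin\textnormal{Ker}(P-\lambda E)^{k_i}$, nor verify that the coefficient multiplying $d\lambda$ is actually nonzero at $x_0$.

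The paper sidesteps both issues by a much more computational route. It invokes Turiel's explicit local normal form (Theorem~\ref{T:Turiel_OneNonConst_LocalForm}) to write $\omega_0,\omega_1$ in coordinates, constructs an explicit Jordan frame by closed formulas, and then reads off concrete generators for \emph{every} invariant distribution at once (Lemma~\ref{L:InvFol_Formula}). Integrability is then checked by brute-force bracketing: most commutators land in $\langle\partial/\partial z\rangle$, the distribution $\textnormal{Im}(P-\lambda E)$ is handled by the Nijenhuis identity as you suggest, and for the borderline kernels one computes a single explicit bracket $[u_s,v_s]=\delta^{k_s}_{m_s}\,\frac{k_s}{\frac12 x_1^1+1}\,\frac{\partial}{\partial y_1^{k_s}}$ and sees directly that it escapes the kernel precisely when $m_s=k_s$. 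Your intrinsic approach is more elegant in spirit, but to complete it you would need a coordinate-free mechanism that handles arbitrary sums of $\textnormal{Ker}\cap\textnormal{Im}$ pieces simultaneously; the paper's explicit-frame calculation is what actually supplies this.
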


Together with the description of all linear invariant subspaces in Section  \ref{S:LinearInvSubs} (see Theorems \ref{T:Linear_Inv_Decomp}, \ref{T:InvSubsJord} and \ref{T:InvSubs_Real} as well as Remark \ref{Rem:InvSubsJord_Heights}) Theorems  \ref{T:Eigenvalue_Strong_Decomposition}, \ref{T:InvDistEigenDecomp} and  \ref{T:IntInvSubs_NonConstEigen} completely describe all integrable invariant distributions in the  neighbourhood of a regular point.

The proofs of Theorems  \ref{T:InvDistEigenDecomp} and
\ref{T:IntInvSubs_NonConstEigen} 
are given in section \ref{SubS:Proof_Main_Inv_Fol}.

\section{Linear invariant subspaces} \label{S:LinearInvSubs}

In this section, we describe all invariant subspaces of a linear space $V$ equipped with two skew-symmetric bilinear form $A$ and $B$, one of which is non-degenerate (we will assume that  $B$ is the non-degenerate form). In this case, instead of a pair of bilinear
forms $A$ and $B$ it will be more convenient to consider a pair consisting of the non-degenerate bilinear form $B$ and the operator $P = B^{-1}A$ which is selfadjoint with respect to the form $B$.

All theorems about the structure of linear invariant subspaces
immediately follow from the structure of the group of automorphisms $\textnormal{Aut}(V, B,
P)$, which was described by P.~Zhang in \cite{Pumei10}.

\textbf{Reduction of the problem.} First, we reduce the problem to the case when
the characteristic polynomial of the operator  $P$  is irreducible. Recall that  the  generalized eigenspace $V^{\lambda}$  of an operator $P$ corresponding to an eigenvalue $\lambda$ is the set $\textnormal{Ker}(P - \lambda E)^N$, where the number $N$  is sufficiently large. In the real case we will consider either generalized eigenspaces with real eigenvalues or generalized eigenspaces corresponding to a pair of complex conjugate eigenspace. By the generalized eigenspace corresponding to a pair of complex conjugate eigenvalues $\alpha \pm i \beta$ we mean the kernel \[\textnormal{Ker} (P^2 - 2 \alpha P + (\alpha^2 + \beta^2)E)^N.\]

Note that since the operator $P$ is self-adjoint, the generalized eigenspaces corresponding to different eigenvalues are
orthogonal with respect to the form $B$. Therefore the restriction of the form $B$ to each generalized eigenspace is non-degenerate.

\begin{theorem} \label{T:Linear_Inv_Decomp}
Let  $P$ be a linear self-adjoint operator on a real or complex symplectic space $(V, B)$. Consider the decomposition of the space $V$  into the sum of generalized eigenspaces of the operator $P$: \[V = \bigoplus_{\lambda} V^{\lambda} \]  Then a subspace $W \subset V$ is invariant if and only if it decomposes into a direct sum of its intersections with the generalized eigenspaces \[W = \bigoplus_{\lambda} (W \cap V^{\lambda} )\] and each intersection $W \cap V^{\lambda}$ is an invariant
subspace of the corresponding generalized eigenspace  $(V^{\lambda}, B|_{V^{\lambda}}, P|_{V^{\lambda}})$.
\end{theorem}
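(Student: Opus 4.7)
The plan is to prove the equivalence in both directions, with the forward implication hinging on a short construction of automorphisms that isolate a single generalized eigenspace.

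I would first handle the backward direction. Since each generalized eigenspace $V^\lambda = \textnormal{Ker}(P - \lambda E)^N$ (or $\textnormal{Ker}(P^2 - 2\alpha P + (\alpha^2+\beta^2)E)^N$ in the real complex-conjugate case) is determined intrinsically by $P$, every $g \in \textnormal{Aut}(V, B, P)$ commutes with $P$ and hence preserves each $V^\lambda$. The restriction $g|_{V^\lambda}$ visibly lies in $\textnormal{Aut}(V^\lambda, B|_{V^\lambda}, P|_{V^\lambda})$, so any direct sum $W = \bigoplus_\lambda W_\lambda$ with each $W_\lambda$ invariant in $V^\lambda$ is automatically preserved by $g$.

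For the forward direction, the key step is to construct, for each eigenvalue $\mu$, an automorphism $g_\mu \in \textnormal{Aut}(V, B, P)$ that acts as $-\textnormal{Id}$ on $V^\mu$ and as $\textnormal{Id}$ on $\bigoplus_{\lambda\neq\mu} V^\lambda$. Commutation with $P$ is automatic because $g_\mu$ is a scalar on each $P$-invariant summand; preservation of $B$ follows from the $B$-orthogonality of distinct generalized eigenspaces (recorded in the paragraph just before the theorem, and itself a consequence of the $B$-self-adjointness of $P$), since under that orthogonality $B$ splits as a sum of its restrictions to the $V^\lambda$ and the sign contributes $(\pm 1)^2 = 1$ in each summand. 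Given an invariant subspace $W$ and $v = \sum_\lambda v_\lambda \in W$, the relation $g_\mu(v) = v - 2 v_\mu \in W$ forces $v_\mu \in W$, yielding the decomposition $W = \bigoplus_\lambda (W \cap V^\lambda)$.

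It remains to verify that each summand $W_\lambda := W \cap V^\lambda$ is invariant as a subspace of $(V^\lambda, B|_{V^\lambda}, P|_{V^\lambda})$. Any $h$ in the latter automorphism group extends, by the identity on $\bigoplus_{\mu\neq\lambda} V^\mu$, to an element of $\textnormal{Aut}(V, B, P)$ (again by orthogonality); this extension preserves $W$ and hence $W_\lambda$. I do not anticipate any genuine obstacle, since the only nontrivial ingredient, the $B$-orthogonality of distinct generalized eigenspaces, is already available. The argument goes through unchanged in the real case because the subspaces $V^\lambda$ associated with real eigenvalues or with complex-conjugate pairs are themselves real subspaces, so the sign-flip operator $g_\mu$ is a genuine real automorphism.
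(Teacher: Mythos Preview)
Your proof is correct and follows the same underlying idea as the paper: both rely on the natural embedding $\textnormal{Aut}(V^{\lambda}, B|_{V^{\lambda}}, P|_{V^{\lambda}}) \hookrightarrow \textnormal{Aut}(V, B, P)$ obtained by extending by the identity on the other generalized eigenspaces. The paper's proof is a single sentence invoking this embedding, whereas you flesh out the details --- in particular, your explicit use of the sign-flip automorphism $g_\mu$ (which is nothing but the image of $-\textnormal{Id}_{V^{\mu}}$ under that embedding) to force the decomposition $W = \bigoplus_{\lambda}(W \cap V^{\lambda})$ is a clean way to make the paper's one-line argument precise.
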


To prove Theorem \ref{T:Linear_Inv_Decomp} it suffices to use the fact that a subspace of the space $V$ is invariant if and only if it is invariant under the action of the groups $\textnormal{Aut}(V^{\lambda}, B|_{V^{\lambda}}, P|_{V^{\lambda}})$, which are naturally embedded in $\textnormal{Aut}(V, B, P)$.

From now on we will assume that the characteristic polynomial of the operator  $P$ is irreducible. This means that the operator $P$ has only one eigenvalue in the complex case and either one real or a pair of complex conjugate eigenvalues in the real case.

\textbf{The case of one eigenvalue.}  We first consider the case when there is only one eigenvalue. Without loss of generality, we can assume that this eigenvalue is $0$ (in other words, we can assume that the operator  $P$ is nilpotent).

\begin{theorem} \label{T:InvSubsJord} Let $P$ be a nilpotent self-adjoint operator on a symplectic space $(V, B)$. Then any subspace $W \subset V$ which is invariant under the action of the group of automorphisms $\textnormal{Aut}(V, B, P)$ has the form  \begin{equation}
\label{Eq:InvSubsJord_ImKer} W = \bigoplus_{i=1}^s (\textnormal{Ker} P^{k_i}
\cap \textnormal{Im} P^{l_i}), \end{equation} for some $s \in \mathbb{N}$,
$k_i, l_i \geq 0$.   \end{theorem}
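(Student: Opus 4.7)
The plan is to derive the theorem from P.~Zhang's description of $\textnormal{Aut}(V,B,P)$ in \cite{Pumei10}, splitting the argument into an easy inclusion (invariance of the candidate subspaces) and a non-trivial converse (every invariant subspace arises in this way).

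The easy direction rests on the observation that every $\Phi\in\textnormal{Aut}(V,B,P)$ must commute with $P$. Indeed, writing $A=B\circ P$ and using $\Phi$-invariance of both $A$ and $B$, for all $u,v\in V$ one has
\[
B(P\Phi u,\Phi v)=A(\Phi u,\Phi v)=A(u,v)=B(Pu,v)=B(\Phi Pu,\Phi v),
\]
so non-degeneracy of $B$ forces $\Phi P=P\Phi$. Consequently every $\textnormal{Ker}\,P^k$ and every $\textnormal{Im}\,P^l$ is $\textnormal{Aut}(V,B,P)$-invariant, and hence so is each intersection $\textnormal{Ker}\,P^k\cap\textnormal{Im}\,P^l$ and every sum of such intersections; this proves the right-to-left inclusion of \eqref{Eq:InvSubsJord_ImKer}.

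For the converse I would fix a Jordan--Kronecker basis for $(V,B,P)$, writing $V=\bigoplus_j V_j$ where each $V_j$ carries a pair of size-$k_j$ nilpotent Jordan blocks of $P$ coupled by $B$. Given an invariant $W\subset V$, I would introduce the numerical invariants $d_{k,l}=\dim(W\cap\textnormal{Ker}\,P^k\cap\textnormal{Im}\,P^l)$ and use Zhang's explicit generators of $\textnormal{Aut}(V,B,P)$ --- scalings and polynomial-in-$P$ shifts inside a single Jordan pair, together with symplectic mixings between distinct Jordan pairs of equal size --- to show that $W$ is already determined by these numbers. Concretely, by applying such generators to an arbitrary $w\in W$ one produces vectors supported in a prescribed isotypic component at a prescribed kernel and image height, and an induction on the kernel filtration $W\cap\textnormal{Ker}\,P^k$ then expresses $W$ as a sum of subspaces of the form $\textnormal{Ker}\,P^{k_i}\cap\textnormal{Im}\,P^{l_i}$.

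The main obstacle, I expect, is the presence of several Jordan pairs of the same size $k$. When $V$ contains $m>1$ such pairs, $\textnormal{Aut}(V,B,P)$ acts on the isotypic multiplicity space through essentially the full symplectic group $\mathrm{Sp}(2m)$, and the orbit of a generic element mixes all the pairs at once. The technical heart of the proof is therefore to show that, after subtracting the contributions $\textnormal{Ker}\,P^{k_i}\cap\textnormal{Im}\,P^{l_i}$ already captured in $W$, whatever remains inside each isotypic block is still of the required form. Here Zhang's explicit presentation of $\textnormal{Aut}(V,B,P)$ is indispensable: it reduces the claim to a classical linear-algebra statement about $\mathrm{GL}$-invariant (respectively $\mathrm{Sp}$-invariant) subspaces in a space of rectangular matrices, where the only invariant subspaces are the obvious ones cut out by rank conditions --- exactly the conditions imposed by the kernel and image filtrations of $P$.
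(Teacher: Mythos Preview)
Your proposal is correct in substance and, like the paper, rests on Zhang's description of the automorphism algebra. The organisation differs, and the comparison is instructive.

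The paper first groups Jordan blocks by height into isotypic summands $V_J^{k_1}\oplus\cdots\oplus V_J^{k_N}$ (so $V_J^{k}$ collects \emph{all} blocks of height exactly $k$) and notes that each $\textnormal{Aut}(V_J^{k_i},B|,P|)$ embeds in $\textnormal{Aut}(V,B,P)$ acting trivially on the remaining summands; since $-\textnormal{Id}$ lies in every such subgroup, one gets $W=\bigoplus_i(W\cap V_J^{k_i})$ immediately. This splits the converse into two clean sub-problems: (i) a single height, which is exactly your ``$\textnormal{Sp}(2m)$ on the multiplicity space'' observation and yields Corollary~\ref{Cor:InvSubsJord_OneJord}; and (ii) a pair of distinct heights $k_i>k_j$, where two short assertions produce the chain of inequalities $0\le m_i-m_{i+1}\le k_i-k_{i+1}$ of Remark~\ref{Rem:InvSubsJord_Heights}. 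Your route via the numerical invariants $d_{k,l}$ and an induction along the kernel filtration reaches the same conclusion but is less direct: the isotypic splitting makes it unnecessary to track all the $d_{k,l}$ at once, and reduces everything to the cases of one or two block-heights.

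One genuine omission to fix: your list of generators of $\textnormal{Aut}(V,B,P)$ leaves out the mixings between Jordan pairs of \emph{different} heights --- the off-diagonal blocks $C^{i,j}_s$, $i\neq j$, in Zhang's matrix~\eqref{E:bsp_algebra_matrix}. These are precisely what drives step~(ii) above. With only the generators you list, every subspace $\bigoplus_i U^{m_i}(V_J^{k_i})$ with \emph{arbitrary} $m_i$ would be invariant, and most such subspaces are not sums of $\textnormal{Ker}\,P^k\cap\textnormal{Im}\,P^l$. So the cross-height automorphisms are not a technicality but the mechanism that forces the constraints~\eqref{Eq:InvSubsJord_Heights_Conditions}; your final paragraph locates the ``technical heart'' inside a single isotypic block, whereas in the paper's argument that case is routine and the real content lies in the interaction between two heights.
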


\begin{corollary} \label{Cor:InvSubsJord_OneJord}
If a space  $(V, B, P)$  is the sum of Jordan blocks of the same height $k$, then the invariant subspaces are precisely the spaces \[\textnormal{Ker} P^i = \textnormal{Im} P^{k-i}\]
\end{corollary}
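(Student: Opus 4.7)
The plan is to derive this corollary directly from Theorem \ref{T:InvSubsJord} by exploiting the special structure that arises when all Jordan blocks of $P$ have a common height $k$. The key elementary observation is that for such an operator one has the identity
\[
\textnormal{Ker}\, P^{i} \;=\; \textnormal{Im}\, P^{k-i} \qquad (0 \leq i \leq k).
\]
I would first verify this identity: on a single nilpotent Jordan block of size $k$ with the standard basis $e_1,\dots,e_k$ (where $P e_j = e_{j-1}$, $P e_1 = 0$), both $\textnormal{Ker}\, P^i$ and $\textnormal{Im}\, P^{k-i}$ equal the span of $e_1,\dots,e_i$. Since $V$ is by hypothesis a direct sum of such blocks (all of height $k$), the identity extends to all of $V$.

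Next I would feed this identity into Theorem \ref{T:InvSubsJord}. An arbitrary invariant subspace has the form
\[
W \;=\; \bigoplus_{i=1}^{s} \bigl( \textnormal{Ker}\, P^{k_i} \cap \textnormal{Im}\, P^{l_i} \bigr),
\]
and using $\textnormal{Im}\, P^{l_i} = \textnormal{Ker}\, P^{k-l_i}$ each summand rewrites as
\[
\textnormal{Ker}\, P^{k_i} \cap \textnormal{Ker}\, P^{k-l_i} \;=\; \textnormal{Ker}\, P^{\min(k_i,\,k-l_i)}.
\]
Since the family of subspaces $\{\textnormal{Ker}\, P^j\}_{j=0}^{k}$ is totally ordered by inclusion, any sum of them collapses to the largest one. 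Hence $W = \textnormal{Ker}\, P^{m}$ for $m = \max_i \min(k_i, k-l_i)$, which proves that every invariant subspace is of the asserted form.

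For the converse direction, I would simply note that $\textnormal{Ker}\, P^i$ is manifestly preserved by any automorphism $g \in \textnormal{Aut}(V, B, P)$, because $g$ commutes with $P$ by definition, so its invariance is automatic. There is no real obstacle here; the only point requiring any care is confirming the identity $\textnormal{Ker}\, P^i = \textnormal{Im}\, P^{k-i}$, which fails as soon as the Jordan blocks of $P$ have differing heights — this is precisely why the hypothesis of uniform block height is essential.
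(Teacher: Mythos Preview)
Your argument is mathematically valid as a specialization of Theorem~\ref{T:InvSubsJord}, but it runs counter to the paper's logical order. In the paper, Corollary~\ref{Cor:InvSubsJord_OneJord} is not deduced from Theorem~\ref{T:InvSubsJord}; it is established \emph{first}, directly from Zhang's description of the Lie algebra of $\textnormal{Aut}(V,B,P)$ (Theorem~\ref{T:BiSymp_General_Jordan_Case}), and then used as an ingredient in the proof of Theorem~\ref{T:InvSubsJord}. The sketch of that proof explicitly says: ``First of all, using Theorem~\ref{T:BiSymp_General_Jordan_Case}, we can prove Corollary~\ref{Cor:InvSubsJord_OneJord}.'' So your derivation, while correct in isolation, is circular within the paper's development.

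The paper's route in the equal-height case is to observe that Theorem~\ref{T:BiSymp_General_Jordan_Case} with $N=1$ gives the automorphism Lie algebra as block lower-triangular Toeplitz matrices with entries $C_1,\dots,C_k \in \mathfrak{sp}(2l,\mathbb{K})$; since $\mathfrak{sp}(2l,\mathbb{K})$ acts irreducibly on each ``level'' $\mathbb{K}^{2l}$, the only invariant subspaces are the filtration pieces $\textnormal{Ker}\,P^i$. Your approach is slicker once Theorem~\ref{T:InvSubsJord} is available, but that theorem itself rests on the corollary, so the paper has to argue from the structure of the automorphism group rather than from the general classification.
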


The following remark allows us to impose restrictions on the number and types of terms in the formula \eqref{Eq:InvSubsJord_ImKer}.

\begin{remark}\label{Rem:InvSubsJord_Heights}
The sum of subspaces of the form $\textnormal{Ker} P^{k_i} \cap \textnormal{Im} P^{l_i}$  can be described as follows. We denote by $V_J^k$ the sum of all Jordan blocks of the height (exactly) $k$.  Then  by $U^m(V_J^k)$  we denote the subspace of $V_J^k$ consisting of all vectors of the height $m$.

Let the space  $(V, B, P)$ be the sum of Jordan blocks of the height  $k_1, \dots, k_N$, where $k_1 > k_2 > \dots > k_N$. Then any
invariant subspace has the form \begin{equation} \label{Eq:InvSubsJord_Heights} U^{m_1}(V_J^{k_1}) \oplus \dots \oplus U^{m_N}
(V_J^{k_N}),\end{equation} where  \begin{equation}
\label{Eq:InvSubsJord_Heights_Conditions}
\begin{gathered} 0 \leq m_1 - m_2 \leq k_1 - k_2 \\ \cdots \\ 0 \leq m_{N-1} - m_N \leq k_{N-1} - k_N
\end{gathered}\end{equation} \end{remark}

Theorem \ref{T:InvSubsJord} easily follows from the following theorem about the structure of the Lie algebra of the automorphism group $\textnormal{Aut}(V, B, P)$  in the case when there is only one eigenvalue, so we will only sketch  a possible proof.

\begin{theorem}[P.~Zhang, \cite{Pumei10}]\label{T:BiSymp_General_Jordan_Case}
Suppose that the space $(V, A, B)$ consists of $l_i$ Jordan
$k_i$-blocks with eigenvalue $0$, where $i=1, \dots, N$ and $k_1
> k_2 > \dots > k_N$. Consider a basis of $V$  in which the matrices of the operator 
$P=B^{-1}A$ and the form $B$ are block-diagonal
\begin{equation}\label{E:Basis_Jordan_General_Case}
P = \begin{pmatrix} P_1 & & \\ & \ddots & \\ & & P_N  \end{pmatrix},
\qquad  B =  \begin{pmatrix} B_1 & & \\ & \ddots & \\ & & B_N
\end{pmatrix},
\end{equation}  where the blocks $P_i$ and $B_i$ are equal to \begin{equation} P_i = \left( \begin{array}{cccc} 0 &
& & \\ E_{2l_i} & \ddots & & \\ & \ddots & \ddots & \\
& & E_{2l_i} \end{array} \right) , \quad \text{and} \quad B_i = \left(
\begin{array}{cccc}  & & & Q_{2l_i} \\ & & \udots &  \\ & \udots &  &  \\
Q_{2l_i} & &  &  \end{array} \right),  \end{equation} where $Q_{2s} =
\begin{pmatrix} 0 & E_s \\ - E_s & 0 \end{pmatrix}$ and $E_s$ is the $s\times s$ identity matrix.  Then the Lie algebra of the automorphism group $\textnormal{Aut}(V, B, P)$ consists of the elements of the form \begin{equation} \label{E:bsp_algebra_matrix}
\left( \begin{array}{ccccc|ccc|c} C^{1,1}_1 & & & & & &  & & \multirow{5}{*}{$\cdots$} \\
C^{1,1}_2 & C^{1,1}_1 & & & & & & &  \\ \vdots & \ddots & \ddots &  & & C^{1,2}_1 & & & \\
\vdots & \ddots & \ddots & \ddots & & \vdots & \ddots & &
\\ C^{1,1}_n & \cdots & \cdots & \cdots & C^{1,1}_1  & C^{1,2}_m & \cdots & C^{1,2}_1 &
\\ \hline C^{2,1}_1 &  & & & & C^{2,2}_1 & &  & \multirow{3}{*}{$\cdots$} \\
\vdots & \ddots & & & & \vdots & \ddots  & & \\ C^{2,1}_m & \cdots &
C^{2,1}_1 & & & C^{2,2}_m & \cdots & C^{2,2}_1 & \\ \hline
\multicolumn{5}{c|}{\cdots}   & \multicolumn{3}{c|}{\cdots} & \cdots
\end{array} \right),
\end{equation}
where $C^{i,j}_s$ are  $l_i \times l_j$ matrices satisfying the relations
\begin{equation} \label{E:Cond_on_BiSymp_Jordan_Case} (C^{j,i}_s)^T Q_{2l_j} + Q_{2l_i} C^{i, j}_s =0
\end{equation}
In particular, the elements of the diagonal blocks $C^{i,i}_s$ are elements of the symplectic Lie algebra $sp(2l_i, \mathbb{K})$ since \begin{equation} (C^{i,i}_s)^{T} Q_{2l_i} + Q_{2l_i} C^{i, i}_s =0
\end{equation}
\end{theorem}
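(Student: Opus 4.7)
The Lie algebra of $\textnormal{Aut}(V, B, P)$ consists of those endomorphisms $X \colon V \to V$ that simultaneously satisfy $XP = PX$ (infinitesimal preservation of $P$) and $X^{T} B + B X = 0$ (skew-symmetry with respect to $B$). Since $A = BP$ and $B$ is non-degenerate, preserving the pair of forms $(A, B)$ is equivalent to this pair of linear conditions on $X$: indeed, for $X$ skew with respect to $B$ one has $X^{T}A + AX = X^{T}BP + BPX = -BXP + BPX = B[P,X]$, which vanishes precisely when $X$ commutes with $P$. The plan is to impose these two conditions one after the other: first describe the centralizer of $P$ in the endomorphism algebra of $V$, and then cut it down by the symplectic condition.

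For the centralizer, I would group the basis according to the Jordan sizes $k_1 > k_2 > \dots > k_N$. Inside the macro-block of type $k_i$ the operator $P_i$ acts as the nilpotent cyclic shift of length $k_i$ whose step is the identity on a $2 l_i$-dimensional ``layer'', so that $V$ decomposes as a $\mathbb{K}[P]$-module into isotypic pieces with explicit cyclic generators. The standard description of $P$-equivariant maps between such pieces forces the $(i,j)$-block of $X$ to be a lower block-Toeplitz matrix with $\min(k_i, k_j)$ independent diagonals filled by arbitrary matrices $C^{i,j}_s$, aligned with the bottom-left corner so that generators of a shorter chain are sent into the appropriately deep layer of a longer chain. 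This produces precisely the staircase matrix shape displayed in \eqref{E:bsp_algebra_matrix}.

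For the symplectic condition, I would substitute the Toeplitz form into $X^{T} B + B X = 0$. Because $B$ is block-diagonal with each $B_i$ supported on a single macro-block, the equation decouples into a relation between the $(i,j)$- and $(j,i)$-blocks of $X$ for each pair $(i,j)$. Using the explicit antidiagonal form of $B_i$ built from $Q_{2 l_i}$, a direct computation matches the $s$-th Toeplitz diagonal of the $(i,j)$-block with the $s$-th Toeplitz diagonal of the $(j,i)$-block through the single linear identity $(C^{j,i}_s)^{T} Q_{2l_j} + Q_{2l_i} C^{i,j}_s = 0$, thereby \emph{halving} the number of free parameters. On the diagonal $i = j$ this is exactly the defining relation of $sp(2 l_i, \mathbb{K})$.

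The conceptual difficulty is concentrated in the first step: correctly aligning the Toeplitz patterns of the off-diagonal blocks when $k_i \neq k_j$. Getting the alignment wrong produces maps that commute with $P$ on cyclic generators but fail to respect the filtration by powers of $P$, so the bookkeeping of which diagonals are allowed to be nonzero must be carried out with care. Once the centralizer of $P$ is pinned down in the adapted basis, the symplectic constraint is a purely mechanical matrix computation that collapses cleanly to the single relation \eqref{E:Cond_on_BiSymp_Jordan_Case} per diagonal index $s$.
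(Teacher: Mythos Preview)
The paper does not actually prove this theorem: it is quoted from P.~Zhang's preprint \cite{Pumei10} and used as a black box in the sketch of the proof of Theorem~\ref{T:InvSubsJord}. So there is no ``paper's own proof'' to compare against.

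That said, your outline is the standard and correct route to this result. The two-step reduction---first compute the centralizer of the nilpotent $P$ as block lower-Toeplitz matrices with $\min(k_i,k_j)$ free diagonals in each $(i,j)$ macro-block, then impose $X^{T}B+BX=0$ and observe that the antidiagonal structure of each $B_i$ pairs the $s$-th diagonal of the $(i,j)$-block with the $s$-th diagonal of the $(j,i)$-block via exactly \eqref{E:Cond_on_BiSymp_Jordan_Case}---is precisely how one establishes such a description, and is presumably what Zhang does. Your remark that the delicate point is the correct bottom-left alignment of the Toeplitz pattern when $k_i\neq k_j$ is accurate; once that bookkeeping is done the symplectic step is mechanical. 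One small note: the blocks $C^{i,j}_s$ are in fact $2l_i\times 2l_j$ (not $l_i\times l_j$ as the statement literally says---this is a typo in the paper, visible from the fact that $Q_{2l_i}C^{i,j}_s$ must make sense), so when you carry out the computation you should keep track of the full $2l_i$-dimensional layers.
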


\begin{proof}[The scheme of the proof of Theorem \ref{T:InvSubsJord}] It is obvious that any subspace of the form \eqref{Eq:InvSubsJord_ImKer}  has the form \eqref{Eq:InvSubsJord_Heights}  and vice versa. For example, \[
U^{m_1}(V_J^{k_1}) \oplus \dots \oplus U^{m_N} (V_J^{k_N}) = \bigoplus^{N}_{i=1} (\textnormal{Ker} P^{m_i} \cap \textnormal{Im} P^{k_i - m_i}) \]  It is also obvious that the subspaces of the form \eqref{Eq:InvSubsJord_ImKer} are invariant. Therefore, it remains to show that any invariant subspace has the form \eqref{Eq:InvSubsJord_Heights}. Further, if we fix a decomposition \[V = V_J^{k_1} \oplus \dots \oplus V_J^{k_N},\] then we get a natural embedding  $\textnormal{Aut}(V_J^{k_i}) \hookrightarrow \textnormal{Aut}(V, B, P)$ for which the elements of $\textnormal{Aut}(V_J^{k_i})$  act identically on all terms $V_J^{k_j}$ except $V_J^{k_i}$.  Therefore, any invariant subspace  $W$ is a direct sum of invariant subspaces \[W = \bigoplus(W \cap V_J^{k_i}) \] Thus, the problem has been reduced to the case when there are only one or two types of Jordan blocks. First of all, using Theorem \ref{T:BiSymp_General_Jordan_Case}, we can prove Corollary \ref{Cor:InvSubsJord_OneJord}. Thus it will be proved that the invariant subspaces of  $V_J^{k_i}$  are the subspaces $U^m(V_J^{k_i})$  and only them. After that it remains to
prove the inequalities \eqref{Eq:InvSubsJord_Heights_Conditions}. They follow from the following two statements, which also can be easily proved using Theorem \ref{T:BiSymp_General_Jordan_Case}.

\begin{assertion}
If there exists a vector $v \in V_J^{k_1} \cap \textnormal{Im} P^l$ in an invariant subspace $W \subset (V, B, P)$, then $V_J^{k_2} \cap \textnormal{Im}
P^{l} \subset W$ for any $k_2< k_1$.
\end{assertion}

\begin{assertion}
If there exists a vector $v \in V_J^{k_2} \cap \textnormal{Ker} P^k$  in an invariant subspace $W \subset (V, B, P)$, then  $V_J^{k_1} \cap \textnormal{Ker}
P^{k} \subset W$ for any $k_2< k_1$.
\end{assertion}
\end{proof}

\textbf{Real case.} Let us now consider the case of two complex conjugate eigenvalues.   This case is easy reduced to the complex case due to existence of a natural complex structure.

\begin{lemma}\label{L:Complex_Str_in_Real_Jordan_Eigenspace}
In each generalized eigenspace of the operator $P$ corresponding to the pair of complex
conjugate eigenvalue $\alpha \pm i\beta$  the semisimple part of the operator $\frac{P - \alpha E }{\beta}$
is a complex structure $J$ which is self-adjoint with respect to $A$ and $B$.
\end{lemma}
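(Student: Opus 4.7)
The plan is to restrict everything to the generalized eigenspace $V^{\alpha\pm i\beta}$ and apply the real Jordan--Chevalley (additive Jordan) decomposition to the operator $Q := (P-\alpha E)/\beta$. By the definition of the generalized eigenspace, $(P^2 - 2\alpha P + (\alpha^2+\beta^2)E)^N = 0$ on $V^{\alpha\pm i\beta}$, and dividing by $\beta^{2N}$ this is exactly $(Q^2+E)^N = 0$. Hence the minimal polynomial of $Q|_{V^{\alpha\pm i\beta}}$ is a power of the $\mathbb{R}$-irreducible polynomial $x^2+1$.

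Second, I invoke the Jordan--Chevalley decomposition over $\mathbb{R}$: since $x^2+1$ is a separable irreducible factor, we may write $Q = J + R$ with $J$ semisimple, $R$ nilpotent, $JR = RJ$, and, crucially, both $J$ and $R$ equal to polynomials in $Q$ (with real coefficients). The semisimple part $J$ has minimal polynomial equal to the radical of the minimal polynomial of $Q$, i.e.\ exactly $x^2+1$, so $J^2 + E = 0$. This verifies that $J$ is a complex structure on $V^{\alpha\pm i\beta}$.

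Third, I check that $J$ is $B$-self-adjoint. The operator $P$ is self-adjoint with respect to $B$: indeed, $B(PX,Y) = A(X,Y) = -A(Y,X) = -B(PY,X) = B(X,PY)$ since $B$ is skew-symmetric. Therefore $Q = (P-\alpha E)/\beta$ is $B$-self-adjoint, and any polynomial in $Q$ is $B$-self-adjoint; in particular so is $J$. Finally $J$ is $A$-self-adjoint: since $J$ is a polynomial in $P$ it commutes with $P$, and then
\[
A(JX,Y) \;=\; B(PJX,Y) \;=\; B(JPX,Y) \;=\; B(PX,JY) \;=\; A(X,JY),
\]
which completes the proof.

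The only point that requires any care is the existence of the real Jordan--Chevalley decomposition with $J,R$ polynomials in $Q$; this is standard because $\mathbb{R}$ is a perfect field and the minimal polynomial of $Q$ is a power of a single separable irreducible factor, so no obstacle arises. Everything else is a direct computation once $J$ is written as a polynomial in $P$.
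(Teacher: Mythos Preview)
Your proof is correct and follows essentially the same approach as the paper's own proof: both argue that $J^2=-E$ from the fact that the characteristic (equivalently, minimal) polynomial of $Q=(P-\alpha E)/\beta$ on the generalized eigenspace is a power of $t^2+1$, and both derive self-adjointness from the fact that $J$ is a polynomial in $P$. You have simply supplied more detail---invoking the Jordan--Chevalley decomposition explicitly to justify that $J$ is a polynomial in $Q$, and spelling out the self-adjointness computations---where the paper leaves these as one-line remarks.
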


\begin{proof}[Proof of Lemma \ref{L:Complex_Str_in_Real_Jordan_Eigenspace}]
$J$ is a complex structure, i.e. $J^2= - E$, since   the characteristic polynomial of the operator
 $\frac{P - \alpha E }{\beta}$ is equal to $(t^2 +1)^n$.   The operator $J$  is self-adjoint since it is a polynomial in $P$. \end{proof}

Suppose that the space $(V, A, B)$ consists only of real Jordan blocks with the same complex eigenvalue $\lambda = \alpha + i \beta$. Denote by $V^{\mathbb{C}}$ the complexification of the space  $V$ with respect to the complex structure $J$ from Lemma \ref{L:Complex_Str_in_Real_Jordan_Eigenspace}.  Consider the following complex bilinear forms on $V$:
\[A^{\mathbb C}(u, v) = A(u, v) - i A(u, Jv), \quad \text{and} \quad
B^{\mathbb C}(u, v) = B(u, v) - i B(u, Jv). \] The forms $A^{\mathbb C}$ and $B^{\mathbb C}$  are well-defined complex
bilinear forms on $V$ since the operator $J$ is  self-adjoint with respect to $A$ and $B$.

Note that the space  $V^{\mathbb{C}}$  consists of the same blocks as the space $V$, i.e. to every real Jordan $k$-block with eigenvalue $\alpha + i \beta$  in the decomposition of $V$ there corresponds a complex Jordan $k$-block with the same eigenvalue in the decomposition of the space $V^{\mathbb{C}}$.

This complexification allows us to identify the automorphisms of  $(V, A, B)$ with the automorphisms of $(V^{\mathbb{C}}, A^{\mathbb{C}}, B^{\mathbb{C}})$ and the real invariant subspaces with the complex invariant subspaces.

\begin{theorem} \label{T:InvSubs_Real}
Suppose that the space $(V, A, B)$ consists of only real Jordan blocks with the same complex eigenvalue $\lambda = \alpha + i \beta$.  Then every automorphism $Q \in \textnormal{Aut}(V, A, B)$   preserves the natural complex structure $J$ from  Lemma \ref{L:Complex_Str_in_Real_Jordan_Eigenspace}: \[  Q J = J Q.\] As a consequence, there is a natural one-to-one correspondence \begin{equation} \label{E:BiSymp_Real_Complex} \textnormal{Aut}(V, A, B) \cong \textnormal{Aut}(V^{\mathbb{C}}, A^{\mathbb{C}},
B^{\mathbb{C}}). \end{equation}  The subspace $U \subset (V, A, B)$  is invariant if and only if it is $J$-invariant and the corresponding subspace $U^{\mathbb{C}}$ of the space $(V^{\mathbb{C}}, A^{\mathbb{C}}, B^{\mathbb{C}})$  is invariant.
\end{theorem}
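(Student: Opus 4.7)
The plan is to prove the three claims of the theorem in sequence, with each step building on the previous. The first claim, $QJ = JQ$, is direct: any $Q \in \textnormal{Aut}(V, A, B)$ preserves both forms, hence also the recursion operator $P = B^{-1}A$, giving $QP = PQ$. Since $J$ is the semisimple part of $\beta^{-1}(P - \alpha E)$, the Jordan decomposition expresses it as a polynomial in $P$, and so $Q$ commutes with $J$ as well.

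For the group identification \eqref{E:BiSymp_Real_Complex}, I would verify both inclusions by direct computation. If $Q \in \textnormal{Aut}(V, A, B)$, the commutation $JQ = QJ$ just established makes $Q$ a $\mathbb{C}$-linear endomorphism of $V^{\mathbb{C}}$, and
\[
A^{\mathbb{C}}(Qu, Qv) = A(Qu, Qv) - i A(Qu, JQv) = A(Qu, Qv) - i A(Qu, QJv) = A(u, v) - i A(u, Jv) = A^{\mathbb{C}}(u, v),
\]
with the identical calculation for $B^{\mathbb{C}}$. Conversely, a $\mathbb{C}$-linear $Q$ preserving $A^{\mathbb{C}}$ and $B^{\mathbb{C}}$ preserves their real parts $A$ and $B$, so lies in $\textnormal{Aut}(V, A, B)$.

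For the correspondence of invariant subspaces, one direction is immediate from the previous step: if $U \subset V$ is $J$-invariant and $U^{\mathbb{C}}$ is invariant under $\textnormal{Aut}(V^{\mathbb{C}}, A^{\mathbb{C}}, B^{\mathbb{C}})$, then the identification of groups makes $U$ automatically invariant under $\textnormal{Aut}(V, A, B)$.

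The main obstacle is the converse --- showing that every real-invariant subspace is automatically $J$-invariant. My approach is to work at the Lie algebra level. A direct computation using $B(Ju, v) = B(u, Jv)$ together with the skew-adjointness relations defining $\mathfrak{aut}(V, A, B)$ shows that this Lie algebra is closed under left multiplication by $J$: if $X \in \mathfrak{aut}(V, A, B)$, then so is $JX$. For a real invariant subspace $U$, passing to the identity component gives $Xu \in U$ for all $X \in \mathfrak{aut}(V, A, B)$ and $u \in U$, and therefore also $(JX)u = J(Xu) \in U$. Combining this with the explicit block structure of $\mathfrak{aut}(V, A, B)$ from the real analogue of Theorem~\ref{T:BiSymp_General_Jordan_Case} --- which identifies this Lie algebra, equipped with its $J$-complex structure, with the complex Lie algebra $\mathfrak{aut}(V^{\mathbb{C}}, A^{\mathbb{C}}, B^{\mathbb{C}})$ acting $\mathbb{C}$-linearly on $V^{\mathbb{C}}$ --- one concludes that $U$ itself must be $J$-invariant. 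This last step is the most delicate: it is where Zhang's description of the Lie algebra does the essential work, by exhibiting enough elements of $\mathfrak{aut}(V, A, B)$ to force $JU \subseteq U$ for every real invariant subspace $U$.
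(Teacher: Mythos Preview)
Your treatment of $QJ = JQ$ and of the group identification \eqref{E:BiSymp_Real_Complex} matches the paper's. The divergence is in the key step, showing that every real-invariant $U$ is $J$-invariant. Your observation that $\mathfrak{aut}(V,A,B)$ is closed under left multiplication by $J$ is correct (each $X \in \mathfrak{aut}$ commutes with $P$, hence with $J$, and $J$ is $B$-self-adjoint, so the skew-adjointness conditions transfer). But this only yields $J(\mathfrak{aut}\cdot U) \subseteq U$, not $JU \subseteq U$: you would still need $\mathfrak{aut}\cdot U = U$, which is not automatic (for instance, $0 \in \mathfrak{aut}$ contributes nothing, and no single $X \in \mathfrak{sp}$ acts as the identity). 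You acknowledge this and defer to Zhang's structure theorem, but the argument is not spelled out, and at that point the detour through $J\mathfrak{aut} \subseteq \mathfrak{aut}$ has not actually bought anything --- you are back to exhibiting explicit elements.

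The paper's route is more direct and stays at the group level. Fixing a real Jordan--Kronecker basis, for each real Jordan block it writes down the automorphisms
\[
Q_{c,d} = \begin{pmatrix} R^T & 0 \\ 0 & R^{-1} \end{pmatrix}, \qquad R = \begin{pmatrix} c & -d \\ d & c \end{pmatrix} \oplus \cdots \oplus \begin{pmatrix} c & -d \\ d & c \end{pmatrix}, \qquad c,d \in \mathbb{R},\ c^2+d^2 \neq 0,
\]
acting as the identity on the other blocks. Since an invariant $U$ contains every finite $\mathbb{R}$-linear combination of the vectors $Q_{c,d}u$, and since varying $(c,d)$ sweeps out the full $\mathbb{C}$-line $\langle u, Ju\rangle_{\mathbb{R}}$ within each block, one recovers $Ju \in U$ directly. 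This closes the gap without invoking the full Lie-algebra description. If you want to keep your infinitesimal viewpoint, the clean fix is to note that the generators $X_1 = \mathrm{diag}(I,-I)$ and $X_2 = \frac{d}{dt}\big|_{t=0}Q_{\cos t,\sin t}$ both lie in $\mathfrak{aut}$ and satisfy $X_1 X_2 = J$ on that block, so $Ju_{(i)} = X_1(X_2 u) \in U$ for each block component $u_{(i)}$.
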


\begin{proof} The automorphism $Q \in \textnormal{Aut}(V, A, B)$ preserves the structure $J$ because it  preserves the forms $A$ and $B$. In order to prove that the invariant subspace can be complexified we can use the fact that if an invariant subspace $U$  contains a  vector $u$, then it also contains all (finite) linear combinations of vectors of the form $Qu$, where
$Q \in \textnormal{Aut}(V, A, B)$. To prove that if $u \in U$, then $Ju \in U$ it is sufficient to fix an arbitrary Jordan--Kronecker form and consider operators which act on one real Jordan block by the matrices \[\begin{pmatrix} R^T & 0 \\ 0 & R^{-1}\end{pmatrix}, \qquad \text{where} \qquad R={\small \left(
\begin{matrix}
\begin{array}{|c c|}\hline c & -d
\\ d & c \\
\hline \end{array}
  & &         \\
        &        \ddots &  \\
    &  & \begin{array}{|c c|} \hline c & -d \\ d & c \\ \hline  \end{array} \\
\end{matrix} \right) },
\qquad c,d \in \mathbb{R}, \quad c^2 + d^2 \ne 0\] and act trivially on the other Jordan blocks. \end{proof}

\section{Local properties of non-degenerate bi-Hamiltonian structures}

In this section we describe the main results proved by Turiel in \cite{Turiel94}, which we reformulate in a convenient for us form. In the paper \cite{Turiel94} the structure of compatible $2$-forms is described  in the  neighbourhoods of regular points $x_0 \in (M, \omega_0, \omega_1)$ such that each eigenvalue is either constant or has no critical points in a  neighbourhood of the point $x_0$:
\[\lambda_i (x) \equiv \textnormal{const} \qquad \text{or} \qquad d \lambda_i (x)\ne 0.\] We will call such regular points \emph{regular uncritical}.

The following general theorem reduces the problem to the case when the characteristic polynomial of the field of endomorphisms $P = \omega_0^{-1}
\omega_1$ is irreducible.

\begin{theorem}[F.~Turiel, \cite{Turiel94}] \label{T:CompSymp_Characteristic_Decomposition}
Let  $(\omega_0, \omega_1)$ be a pair of compatible $2$-forms  on a manifol $M$.  Suppose that the characteristic polynomial  
$\chi(x)$ of the field of endomorphisms $P = \omega_0^{-1} \omega_1$ decomposes into a direct product of polynomials which are mutually prime at each point of the manifold $M$:
\[ \chi (x) = \chi_1 (x) \chi_2(x), \qquad \qquad
\textnormal{GCD}({\chi_1(x), \chi_2(x)}) \equiv 1.\] Then any point $x \in M$ has a  neighbourhood $Ox$ that can be decomposed into a direct product \[(Ox, \omega_0, \omega_1) = (O'x, \omega_0', \omega_1') \times  (O''x,
\omega_0'', \omega_1'')
\] so that the characteristic polynomials of the pairs of forms $(\omega_0', \omega_1')$ and $(\omega_0'', \omega_1'')$ are $\chi_1(x)$ and $\chi_2(x)$ respectively.
\end{theorem}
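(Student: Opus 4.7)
The plan is to produce the product decomposition directly, by constructing the two factors as leaves of a pair of complementary foliations associated to the polynomial splitting $\chi=\chi_1\chi_2$. Define
\[E_1 = \textnormal{Ker}\,\chi_1(P), \qquad E_2 = \textnormal{Ker}\,\chi_2(P).\]
Since $\chi_1(x,t),\chi_2(x,t)$ are coprime polynomials in $t$ at every point $x$, the Bezout identity with parameters produces smooth polynomial coefficients $a(x,t),b(x,t)$ with $a\chi_1+b\chi_2\equiv 1$ on a neighbourhood of $x_0$. Substituting $P$ for $t$ gives $a(P)\chi_1(P)+b(P)\chi_2(P)=\textnormal{Id}$, so $TM=E_1\oplus E_2$ as a direct sum of smooth subbundles of constant ranks $\deg\chi_1$ and $\deg\chi_2$.

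Next I would verify the two key structural properties of this splitting. First, \emph{$\omega_0$- and $\omega_1$-orthogonality}: since $\omega_1=\omega_0\circ P$ with both forms skew, $P$ is self-adjoint with respect to each, hence so is any polynomial in $P$. For $u\in E_1$, $v\in E_2$, writing $v=a(P)\chi_1(P)v$ by Bezout,
\[\omega_0(u,v)=\omega_0\bigl(u,a(P)\chi_1(P)v\bigr)=\omega_0\bigl(a(P)\chi_1(P)u,v\bigr)=0,\]
and similarly $\omega_1(u,v)=0$. Second, \emph{integrability of $E_1$ and $E_2$}: this is where $N_P=0$ enters. I would rely on the standard fact that, for a torsionless field of endomorphisms $P$, the kernel of any polynomial in $P$ whose spectrum is separated from the rest of the spectrum of $P$ is closed under Lie brackets; concretely, for $X,Y\in\Gamma(E_1)$ one expresses $\chi_1(P)[X,Y]$ through iterated applications of the identity $N_P(\cdot,\cdot)=0$ together with the $P$-invariance of $E_1$ (which is automatic, since $P$ commutes with $\chi_1(P)$), yielding zero.

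Having two transverse integrable distributions summing to $TM$, the simultaneous Frobenius theorem supplies local coordinates $(u^1,\dots,u^{d_1},v^1,\dots,v^{d_2})$ at $x_0$ with $E_1=\textnormal{span}(\partial_{u^i})$ and $E_2=\textnormal{span}(\partial_{v^j})$. The orthogonality from the previous step forces $\omega_0$ (and $\omega_1$) to have no mixed $du\wedge dv$ terms:
\[\omega_0=\tfrac12 \alpha_{ij}(u,v)\, du^i\wedge du^j+\tfrac12 \beta_{kl}(u,v)\, dv^k\wedge dv^l.\]
Imposing $d\omega_0=0$ and isolating the $du^p\wedge du^i\wedge dv^l$ and $dv^p\wedge dv^k\wedge du^i$ components gives $\partial_{v^l}\alpha_{ij}=0$ and $\partial_{u^i}\beta_{kl}=0$. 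Therefore $\omega_0=\omega_0'(u)+\omega_0''(v)$ and analogously $\omega_1=\omega_1'(u)+\omega_1''(v)$; this is the desired product decomposition. The characteristic polynomials of the factor pairs are $\chi_1$ and $\chi_2$ because $P$ preserves the splitting and $P|_{E_i}$ is annihilated by $\chi_i$, while the Nijenhuis tensor and closedness of each restricted pair are inherited from $(\omega_0,\omega_1)$ via the product structure.

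The main technical obstacle is the integrability claim for $\textnormal{Ker}\,\chi_i(P)$: the vanishing of $N_P$ only directly controls the bracket $[PX,PY]$ in terms of brackets with at most one factor of $P$, whereas showing $\chi_1(P)[X,Y]=0$ for $X,Y\in E_1$ requires propagating $N_P=0$ through a polynomial in $P$ whose coefficients themselves depend on the point via the spectrum of $P$. Controlling the derivatives of those coefficients along vector fields tangent to $E_1$ is the delicate part, and it is precisely this analysis that constitutes the technical core of Turiel's argument in \cite{Turiel94}.
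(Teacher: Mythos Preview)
The paper does not supply its own proof of this statement: it is quoted as a result of Turiel with a reference to \cite{Turiel94} and is then used, without further argument, as input to Theorems~\ref{T:Eigenvalue_Strong_Decomposition} and~\ref{T:InvDistEigenDecomp}. There is therefore nothing in the present paper to compare your proposal against.

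For what it is worth, your outline is the standard route and matches how the result is established in the literature. The Bezout splitting $TM=E_1\oplus E_2$, the $\omega_0$- and $\omega_1$-orthogonality via self-adjointness of polynomials in $P$, the simultaneous Frobenius chart, and the separation of the coefficients of the closed forms into $u$- and $v$-dependent pieces are all correct as written. You are also right that the only substantive step is the involutivity of $E_i=\textnormal{Ker}\,\chi_i(P)$ under $N_P=0$ with pointwise-varying coefficients of $\chi_i$; since the present paper itself simply defers this to \cite{Turiel94}, your doing the same is entirely in keeping with it.
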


Note that Theorem \ref{T:CompSymp_Characteristic_Decomposition} holds in a  neighbourhood of any regular point.

The case of constant eigenvalues is described by the following theorem.

\begin{definition} We will call a non-degenerate bi-Hamiltonian structure $(\omega_0, \omega_1)$  \emph{flat} at a point $x \in M$ if there exist local coordinates in a  neighbourhood of the point in which the matrices of both forms $\omega_0$ and $\omega_1$ have constant  coefficients. \end{definition}

\begin{theorem}[F.~Turiel, \cite{Turiel94}] A non-degenerate bi-Hamiltonian structure $(\omega_0, \omega_1)$  is flat in a  neighbourhood of a point $x\in M$ if and only if the point $x$ is regular and all the eigenvalues are constant. \end{theorem}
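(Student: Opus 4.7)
\medskip

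\textbf{Proof plan.} The direction ``flat $\Rightarrow$ regular with constant eigenvalues'' is immediate: if there are local coordinates in which both $\omega_0$ and $\omega_1$ have constant matrices, then the operator $P = \omega_0^{-1}\omega_1$ also has a constant matrix, so its eigenvalues and its Jordan block structure are constant throughout the chart; in particular the point is regular and every eigenvalue is constant. I would dispose of this direction in one line and concentrate the rest of the proof on the converse.

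For the converse, my first step is to reduce to a geometrically irreducible case. Since $x$ is regular and every eigenvalue $\lambda_i$ is constant, the characteristic polynomial $\chi(x)$ of $P$ factors globally near $x$ into a product of powers of distinct constant linear (resp.\ irreducible quadratic) factors that are pairwise coprime at every point. Iterating Theorem~\ref{T:CompSymp_Characteristic_Decomposition} then splits $(Ox,\omega_0,\omega_1)$ into a direct product of blocks, each with only one constant real eigenvalue $\lambda$ or one constant complex conjugate pair $\alpha\pm i\beta$. On such a factor I would subtract the scalar operator $\lambda E$ (resp.\ use the complex structure $J$ of Lemma~\ref{L:Complex_Str_in_Real_Jordan_Eigenspace} and the complexification of Theorem~\ref{T:InvSubs_Real}) to reduce to the following model problem: on a symplectic manifold $(M,\omega_0)$ one has a Nijenhuis, $\omega_0$-self-adjoint, nilpotent field of operators $P$ whose Jordan type $(k_1,\dots,k_N)$ with multiplicities $(l_1,\dots,l_N)$ is constant; the goal is to produce local coordinates in which both $\omega_0$ and $P$ have constant matrices (matching the model \eqref{E:Basis_Jordan_General_Case}).

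Next I would build the coordinates. The vanishing of $N_P$ implies, by a standard computation, that each distribution $\ker P^j$ is involutive, yielding a nested family of foliations $\ker P \subset \ker P^2 \subset \cdots \subset \ker P^{k_1}=TM$, and that $\omega_0$ pairs these kernels with the image flag $\operatorname{Im} P^{k_1-j}$ as in the linear model. I would then choose a local frame $v_1(x),\dots,v_n(x)$ realising the Jordan--Kronecker form block-diagonally (which exists by the remark characterising regular points) and try to promote it to a coordinate frame: take the first coordinates to be first integrals of $\ker P^{k_1-1}$, and recursively produce the remaining coordinates by flowing along carefully chosen symmetries of $(\omega_0,P)$. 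At each stage the freedom to alter the choice is controlled by the automorphism algebra of Theorem~\ref{T:BiSymp_General_Jordan_Case}, so one can arrange both $\omega_0$ and $P$ to be constant simultaneously.

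The main obstacle is precisely this last step: aligning the Darboux normalisation of $\omega_0$ with the Jordan normalisation of $P$ at the same time. Either one puts $\omega_0$ in Darboux form first and then has to show that the transition matrix sending the Jordan frame of $P$ to a coordinate frame can be taken inside the automorphism group so as not to spoil the Darboux form; or one runs a Moser-type homotopy $\omega_t = (1-t)\omega_0^{\mathrm{const}} + t\omega_0$ with $P$ fixed in its model form, producing the desired diffeomorphism as the time-$1$ flow of a vector field whose existence relies on the surjectivity of the appropriate symbol map guaranteed by the explicit description of $\operatorname{Aut}(V,\omega_0,P)$ in Theorem~\ref{T:BiSymp_General_Jordan_Case}. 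Either route requires showing that the infinitesimal automorphisms of the pair $(\omega_0,P)$ are sufficiently abundant to absorb the correction terms, which is where the structural theorem of Zhang does the decisive work.
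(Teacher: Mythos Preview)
The paper does not prove this theorem at all: it is stated as a result of Turiel and simply cited from \cite{Turiel94}, with no argument given. So there is no ``paper's own proof'' to compare your proposal against.

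As for the proposal itself: the easy direction is fine, and your reduction of the converse via Theorem~\ref{T:CompSymp_Characteristic_Decomposition} to the single-eigenvalue nilpotent case is the natural first move (for the complex-conjugate pair you want Theorem~\ref{T:Turiel_Real}, not Theorem~\ref{T:InvSubs_Real}, which is purely linear-algebraic). But the core of the problem---finding coordinates in which a nilpotent, $\omega_0$-self-adjoint Nijenhuis operator of fixed Jordan type and the symplectic form $\omega_0$ are simultaneously constant---is only named, not solved. Your two suggested routes are heuristics rather than arguments: for the Moser path you have not identified what cohomological vanishing guarantees a $P$-preserving isotopy, and for the frame-adjustment path Theorem~\ref{T:BiSymp_General_Jordan_Case} describes the \emph{pointwise} automorphism algebra of $(V,\omega_0,P)$, not the infinitesimal symmetries of the pair of differential forms on $M$, so it does not by itself supply the flexibility you need. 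Turiel's actual argument in \cite{Turiel94} is not reproduced in this paper; if you want a self-contained proof you must fill this gap yourself.
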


The case of one eignenvalue without critical points is described by the following Theorems \ref{T:Turiel_OneNonConst_Isom} and
\ref{T:Turiel_OneNonConst_JK_Inv}.

\begin{definition}
Two pairs of non-degenerate bi-Hamiltonian structures $(M', \omega'_0, \omega'_1)$ and $(M'', \omega''_0, \omega''_1)$ will be called \emph{ isomorphic}  if there exists a diffeomorphism   $f: M' \to M''$ such that  $f^* \omega''_0 = \omega'_0$ and $f^* \omega''_1 = \omega'_1$.
\end{definition}

\begin{theorem}[F.~Turiel, \cite{Turiel94}]
\label{T:Turiel_OneNonConst_Isom}Consider two pairs of compatible $2$-forms  $(\omega'_0, \omega'_1)$ and $(\omega''_0, \omega''_1)$ on manifolds $M'$ and $M''$ such that each pair has only one non-constant eigenvalue without critical points.
Then regular points $x' \in M'$ and $x'' \in M''$ have isomorphic  neighbourhoods if and only the Jordan--Kronecker decompositions of the pairs of forms $(\omega'_0, \omega'_1)$ and $(\omega''_0, \omega''_1)$  at these points coincide (i.e., consist of the same set of Jordan blocks). \end{theorem}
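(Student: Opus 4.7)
The ``only if'' direction is immediate: if $f:Ox'\to Ox''$ satisfies $f^*\omega''_i=\omega'_i$, then $df_{x'}$ is a linear isomorphism of $(T_{x'}M',\omega'_0,\omega'_1)$ onto $(T_{x''}M'',\omega''_0,\omega''_1)$, and by the Jordan--Kronecker theorem the decomposition is a complete linear invariant of such a pair. The nontrivial direction is the converse; the plan is to exhibit a canonical local model $(U_{\mathbf{k}},\Omega_0,\Omega_1)$ depending only on the tuple of Jordan block sizes $\mathbf{k}=(k_1,\dots,k_n)$ and on a single scalar coordinate $\lambda$, and then to prove that any pair satisfying the hypotheses of the theorem is locally isomorphic to this model near a regular uncritical point. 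Two pairs sharing the same $\mathbf{k}$ will then both be isomorphic to the common model, and composing isomorphisms yields the required one.

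The first step is to use $d\lambda\neq 0$ to promote $\lambda$ itself to a local coordinate. The Hamiltonian vector field $X=\omega_0^{-1}d\lambda$ satisfies $\mathcal{L}_X\omega_0=0$ automatically; a standard computation combining $N_P=0$ with $d\omega_1=0$ also gives $\mathcal{L}_X\omega_1=0$, so the flow of $X$ preserves the whole bi-Hamiltonian structure and transports the geometry transversally to the level sets of $\lambda$. Next, using the Remark after the definition of a regular point, I would fix on a neighbourhood of $x_0$ a local frame simultaneously putting $\omega_0,\omega_1$ into the block-diagonal Jordan form, with the eigenvalue $\lambda(x)$ appearing on the superdiagonal of the $A_i$ blocks.

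The heart of the argument is to modify this frame, by a smoothly varying pointwise action of $\textnormal{Aut}(T_xM,\omega_0,\omega_1)$, so that it becomes the coordinate frame of some chart $(\lambda,y_1,\dots,y_{N-1})$ in which the matrices of $\omega_0$ and $\omega_1$ retain the canonical block shape but with all $x$-dependence entering only through the single function $\lambda$. This is the main technical obstacle: it amounts to the integrability of a system of PDEs for the adapting change of frame, and both compatibility hypotheses are used in complementary ways. The vanishing $N_P=0$ forces the mixed connection coefficients between different Jordan blocks to vanish, decoupling the blocks; the closedness conditions $d\omega_0=d\omega_1=0$ force the remaining coefficients within each block to be expressible through $d\lambda$ alone. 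The resulting system is triangular with respect to the Jordan filtration and can be integrated block by block along the flow of $X$.

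Once every pair in question has been put into this one-variable canonical form, the conclusion is immediate: if two pairs have the same $\mathbf{k}$, their canonical forms coincide as families parametrised by $\lambda$, so the diffeomorphism of coordinate charts identifying $\lambda'$ with $\lambda''$ on a common interval of values transports one pair into the other and sends $x'$ to $x''$, producing the required local isomorphism.
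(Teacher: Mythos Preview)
The paper does not give its own proof of this theorem: it is quoted from Turiel and accompanied by two companion statements, Theorem~\ref{T:Turiel_OneNonConst_JK_Inv} (the largest Jordan block is strictly larger than all others) and Theorem~\ref{T:Turiel_OneNonConst_LocalForm} (an explicit normal form in coordinates $(x_s^i,y_s^i,z,\lambda)$). Your strategy --- produce a canonical local model $(U_{\mathbf{k}},\Omega_0,\Omega_1)$ depending only on the block sizes and then show every structure of this type is isomorphic to it --- is precisely what Turiel does, and Theorem~\ref{T:Turiel_OneNonConst_LocalForm} \emph{is} that canonical model. So the overall architecture of your proposal matches the paper's (i.e.\ Turiel's) route.

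Two remarks, however. First, your third paragraph is where essentially all of the content lives, and as written it is a description of a hoped-for argument rather than an argument: ``the resulting system is triangular with respect to the Jordan filtration and can be integrated block by block'' is the theorem, not its proof. Turiel's actual construction of the coordinates in Theorem~\ref{T:Turiel_OneNonConst_LocalForm} is delicate, and in particular it forces the block pattern $(k_1+1,k_2,\dots,k_n)$ with a unique largest block (Theorem~\ref{T:Turiel_OneNonConst_JK_Inv}); your sketch does not anticipate this restriction, which is a genuine structural feature of the nonconstant-eigenvalue case and is needed to pin down the model. Second, your claim that $\mathcal{L}_X\omega_1=0$ for $X=\omega_0^{-1}d\lambda$ is correct (it reduces to $d(P^*d\lambda)=0$, which follows from $N_P=0$), but the subsequent assertion that the flow of $X$ ``transports the geometry transversally to the level sets of $\lambda$'' is slightly off: $X$ is tangent to the levels of $\lambda$ (indeed $\mathcal{L}_X\lambda=\omega_0(X,X)=0$), and in Turiel's normal form $X=\pm\partial/\partial z$, not a vector transverse to $\{\lambda=\text{const}\}$. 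This does not invalidate the plan, but the role you assign to $X$ needs adjusting.
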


\begin{theorem}[F.~Turiel, \cite{Turiel94}] \label{T:Turiel_OneNonConst_JK_Inv}
Let $(\omega_0, \omega_1)$ be a pair of compatible $2$-forms  on a manifold $M$ with one eigenvalue $\lambda$  which has no critical points on $M$. Then for any regular point $x_0 \in M$ the biggest Jordan block in the Jordan--Kronecker decomposition of the corresponding tangent space $(T_{x_0}M, \omega_0, \omega_1)$ is always (strictly) bigger than other Jordan blocks.
\end{theorem}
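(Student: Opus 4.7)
My plan is to exploit the Nijenhuis condition $N_P=0$ together with the hypothesis $d\lambda\neq 0$ on $M$. The starting observation, standard for any Nijenhuis endomorphism field with a smooth eigenvalue, is that $d\lambda$ is a left eigencovector: $P^{*}d\lambda=\lambda\,d\lambda$. This can be derived from the identity $\mathcal{L}_{PY}P=P\mathcal{L}_Y P$ (equivalent to $N_P=0$) applied to the splitting $P=\lambda E+N$, where $N:=P-\lambda E$. Dualising by $\omega_0$, the Hamiltonian vector field $X_\lambda:=\omega_0^{-1}(d\lambda)$ lies in $\textnormal{Ker}\,N$ and is nowhere zero on a neighbourhood of $x_0$.

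The technical core is to establish the stronger inclusion
\[
X_\lambda\ \in\ \textnormal{Ker}\,N\,\cap\,\textnormal{Im}\,N^{k_1-1},
\]
i.e.\ that $X_\lambda$ lies in the socle of the maximal Jordan component. To prove this I would pick a local vector field $Y$ with $d\lambda(Y)\equiv 1$ (possible since $d\lambda$ is nowhere zero) and Lie-differentiate the identity $N^{k_1}\equiv 0$, valid near $x_0$ because the minimal polynomial of $P_x$ is $(t-\lambda(x))^{k_1}$ on the regular neighbourhood. This yields
\[
\sum_{j=0}^{k_1-1} N^{j}\,(\mathcal{L}_Y N)\,N^{k_1-1-j}=0.
\]
Combining with the Nijenhuis-derived commutation relation
\[
\mathcal{L}_{PY}N-\lambda\,\mathcal{L}_Y N=N\bigl(E+\mathcal{L}_Y N\bigr),
\]
one can iteratively replace the inhomogeneous term $E$ in $\mathcal{L}_Y N$ by expressions involving $N$ and $\mathcal{L}_{PY}N$; after $k_1-1$ iterations the surviving contribution identifies $X_\lambda$ as a nonzero multiple of $N^{k_1-1}Z$ for an explicit vector field $Z$ built from $Y$.

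With $X_\lambda$ in the socle $S:=\textnormal{Ker}\,N\cap\textnormal{Im}\,N^{k_1-1}$, whose dimension equals $2m$ with $m$ the number of bi-symplectic Jordan blocks of maximal size $k_1$, the final step is to rule out $m\geq 2$. Here the structure of $\textnormal{Aut}(T_{x_0}M,\omega_0,\omega_1)$ from Theorem~\ref{T:BiSymp_General_Jordan_Case} enters: the diagonal block $C^{1,1}_1$ corresponding to the maximal Jordan size contributes a factor $\mathfrak{sp}(2m)$ acting on $S$, and by Theorem~\ref{T:Turiel_OneNonConst_Isom} the local bi-Hamiltonian structure is isomorphic to Turiel's canonical model determined by the Jordan--Kronecker type at $x_0$. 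Testing the canonical element $N^{k_1-1}Z=X_\lambda\neq 0$ against the infinitesimal $\mathfrak{sp}(2m)$-action of automorphisms of this model fixing $x_0$ (they all preserve $\lambda$, hence $X_\lambda$), one sees that the standard representation of $\mathfrak{sp}(2m)$ has no nonzero invariant vectors for $m\geq 1$, so compatibility with a nonzero $X_\lambda$ forces $m=1$, i.e.\ $k_1>k_2$.

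The principal obstacle is the middle step: transforming the operator identity produced by Lie-differentiating $N^{k_1}\equiv 0$ into a concrete identification of $X_\lambda$. The bookkeeping is delicate because $\lambda$ is point-dependent, so $N$ and the scalar operator $\lambda E$ do not commute naively under Lie derivation; one must iterate the Nijenhuis commutation relation carefully, keeping track of the full lower-order correction, in order to isolate the coefficient of $N^{k_1-1}$ at the end.
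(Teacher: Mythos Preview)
The paper does not prove Theorem~\ref{T:Turiel_OneNonConst_JK_Inv}; it is quoted from Turiel \cite{Turiel94} as background and no argument is given.  So there is nothing in the paper to compare your approach against, and your proposal must stand on its own.

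Your first two steps are fine: $P^{*}d\lambda=\lambda\,d\lambda$ follows from $N_P=0$, and since $P$ is $\omega_0$-self-adjoint this dualises to $X_\lambda\in\textnormal{Ker}\,N$.  The identity $\mathcal{L}_{PY}N-\lambda\,\mathcal{L}_Y N=N(E+\mathcal{L}_Y N)$ for $Y\cdot\lambda=1$ is also correct.  However, the crucial claim $X_\lambda\in\textnormal{Im}\,N^{k_1-1}$ is only sketched: you say one should ``iteratively replace the inhomogeneous term $E$'' and that after $k_1-1$ iterations $X_\lambda$ emerges as $N^{k_1-1}Z$, but you do not carry this out, and you yourself flag the bookkeeping as the principal obstacle.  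As written this is a plan, not a proof.

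The real gap is your final step, which is both circular and internally inconsistent.  You invoke Theorem~\ref{T:Turiel_OneNonConst_Isom} together with ``Turiel's canonical model'' to analyse automorphisms, but the only canonical model available in the paper is that of Theorem~\ref{T:Turiel_OneNonConst_LocalForm}, whose very statement already assumes the largest block is strictly larger --- precisely what you are trying to prove.  More seriously, you conflate two different groups: the $\mathfrak{sp}(2m)$ in Theorem~\ref{T:BiSymp_General_Jordan_Case} sits inside the \emph{pointwise linear} automorphism group $\textnormal{Aut}(T_{x_0}M,\omega_0|_{x_0},\omega_1|_{x_0})$, which has no reason to preserve $X_\lambda(x_0)$ (the vector $X_\lambda$ depends on first derivatives of $\lambda$, invisible at a single tangent space).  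The germs of bi-symplectomorphisms fixing $x_0$ do preserve $X_\lambda$, but you have not shown that their linearisations realise the full $\mathfrak{sp}(2m)$ on the socle.  Finally, the logical conclusion is wrong even on its own terms: the standard representation of $\mathfrak{sp}(2m)$ has no nonzero fixed vector for \emph{every} $m\geq 1$, so your argument, if valid, would force $m=0$, not $m=1$.  You need a genuinely different mechanism to rule out $m\geq 2$.
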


Let us now describe a non-degenerate bi-Hamiltonian structure for each set of Jordan blocks with only one largest Jordan block.

\begin{theorem}[F.~Turiel, \cite{Turiel94}] \label{T:Turiel_OneNonConst_LocalForm}
Let $\omega_0$ and $\omega_1$ be compatible $2$-forms on a manifol $M$ and $x_0 \in M$ be a regular point.
Suppose that the field of endomorphisms $P = \omega_0^{-1} \omega_1$ has only one
eigenvalue $\lambda$ and $d\lambda|_{x_0} \ne 0$. Further, suppose that 
the Jordan--Kronecker decomposition of the pair of forms  $\omega_0$ and $\omega_1$ at the point $x_0$ consists of Jordan $k_1 +1, k_2, \dots, k_n$ blocks, where $k_1 \geq k_2 \geq \dots \geq k_n$.  Then in a  neighbourhood of the point $x_0$ there exist local coordinates \[(x_1^1, \dots, x_1^{k_1},
y_1^1, \dots, y_1^{k_1}, x_2^1, \dots, \dots, y_n^{k_n}, z, \lambda)
\] such that \begin{equation} \begin{gathered} \omega_0 =
(\sum_{s=1}^n
\sum_{i=1}^{k_s} dx^i_s \wedge d y_s^i ) + dz \wedge d \lambda  \\
\omega_1 = \lambda \omega_0 + \left( \sum_{s=1}^n \sum_{i=1}^{k_s-1}
dx_s^i \wedge dy_s^{i+1} \right) + \alpha \wedge d\lambda + dy^1_1
\wedge d\lambda,
\end{gathered} \end{equation} where
\begin{equation} \alpha = \sum_{s=1}^n \sum_{i=1}^{k_s} (i+\frac{1}{2})y_s^i
dx^i_s + (i- \frac{1}{2}) x^i_s dy^i_s.\end{equation} \end{theorem}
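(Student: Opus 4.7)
The plan is to bypass a direct construction of the normal-form coordinates and instead reduce the theorem to a verification on an explicit model, invoking Turiel's uniqueness result Theorem~\ref{T:Turiel_OneNonConst_Isom}. I take as a model the manifold $M_0$ of dimension $2(k_1+\dots+k_n)+2$ with coordinates $(x_s^i, y_s^i, z, \lambda)$ and put on it the forms $\omega_0, \omega_1$ defined by the formulas of the statement. It then suffices to prove: (a) the pair $(\omega_0, \omega_1)$ on $M_0$ is a non-degenerate bi-Hamiltonian structure; (b) $P = \omega_0^{-1}\omega_1$ has $\lambda$ as its unique eigenvalue everywhere on $M_0$, with $d\lambda \ne 0$ automatic since $\lambda$ is a coordinate, and the origin is a regular point; (c) at the origin, with $\lambda$ adjusted to $\lambda(x_0)$, the Jordan--Kronecker decomposition of $(T_0 M_0, \omega_0|_0, \omega_1|_0)$ consists of Jordan blocks of sizes $k_1+1, k_2, \dots, k_n$. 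Once these are established, both non-degenerate bi-Hamiltonian structures have one non-constant eigenvalue without critical points and their Jordan--Kronecker decompositions at $x_0$ and at the origin coincide; Theorem~\ref{T:Turiel_OneNonConst_Isom} then supplies a local diffeomorphism identifying the two neighbourhoods, and pulling back the model coordinates yields the required system.

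Steps (b) and (c) are essentially linear-algebra calculations. For (b), the matrix of $P$ in the coordinate basis is block upper triangular with $\lambda$ on the diagonal, so $\chi_P(t) = (t-\lambda)^{\dim M_0}$; constancy of the Jordan--Kronecker type in a neighbourhood of the origin (hence regularity) then reduces to checking that the ranks of $(P - \lambda I)^k$ are locally constant, which follows from the explicit form of $P$. For (c) one uses that $\alpha$ vanishes at the origin, so $\omega_1|_0 - \lambda(x_0)\omega_0|_0 = \sum_{s,i} dx_s^i \wedge dy_s^{i+1} + dy_1^1 \wedge d\lambda$; for each $s \geq 2$ the subspace spanned by $\partial_{x_s^i}, \partial_{y_s^i}$ is clearly a Jordan $k_s$-block, and for the largest block tracing the nilpotent part of $P|_0$ gives two length-$(k_1+1)$ chains, namely $\partial_\lambda \mapsto -\partial_{x_1^1} \mapsto \dots \mapsto -\partial_{x_1^{k_1}} \mapsto 0$ and $\partial_{y_1^{k_1}} \mapsto \dots \mapsto \partial_{y_1^1} \mapsto \partial_z \mapsto 0$, which is exactly the action of the pair on a Jordan $(k_1+1)$-block.

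The main obstacle is step (a). Closure $d\omega_1 = 0$ is a short computation: only the summands $\lambda\omega_0$ and $\alpha \wedge d\lambda$ contribute to $d\omega_1$, and a direct calculation exploiting the particular coefficients $(i \pm \tfrac{1}{2})$ gives $d\alpha = -\sum_{s,i} dx_s^i \wedge dy_s^i$, so that the terms $d\lambda \wedge \omega_0$ and $d\alpha \wedge d\lambda$ cancel. The genuinely technical step is $N_P = 0$: I would compute $P$ explicitly as a matrix of smooth functions from $P = \omega_0^{-1}\omega_1$ and verify $N_P(X, Y) = 0$ on pairs of coordinate vector fields, exploiting the block structure of $P$ to organise the computation. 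The specific $(i \pm \tfrac{1}{2})$ coefficients in $\alpha$, introduced in Turiel's normal form precisely so that the contributions from the coupling $\alpha \wedge d\lambda$ combine correctly with those from the sum $\sum dx_s^i \wedge dy_s^{i+1}$ and the extra term $dy_1^1 \wedge d\lambda$, enter in a non-trivial way, and bookkeeping their interplay across the Jordan blocks is the most delicate aspect of the proof.
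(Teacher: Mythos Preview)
The paper does not give its own proof of this statement: Theorem~\ref{T:Turiel_OneNonConst_LocalForm} is quoted from Turiel~\cite{Turiel94} and used as an input to the paper's arguments, so there is no in-paper proof to compare your proposal against.

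That said, your strategy is coherent and the computations you sketch are correct as far as they go (in particular, your check that $d\alpha = -\sum_{s,i} dx_s^i \wedge dy_s^i$ so that $d\omega_1 = 0$, and the Jordan-chain description at the origin, are right). The one structural concern is possible circularity: you invoke Theorem~\ref{T:Turiel_OneNonConst_Isom}, which is itself cited from the same paper of Turiel, and in normal-form theory the classification statement ``same invariants $\Rightarrow$ locally isomorphic'' is typically \emph{proved} by exhibiting a common normal form --- i.e., by first establishing Theorem~\ref{T:Turiel_OneNonConst_LocalForm}. If Turiel's proof of Theorem~\ref{T:Turiel_OneNonConst_Isom} indeed goes through Theorem~\ref{T:Turiel_OneNonConst_LocalForm}, your reduction is circular and you have not avoided the direct construction at all. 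Before relying on this route you must check that Turiel obtains the isomorphism result by an argument (e.g., a Moser-type path method) that does not already presuppose the explicit normal form. If that independence holds, then what remains is exactly what you identify as the delicate part: verifying $N_P = 0$ for the model; this is a straightforward if lengthy calculation that you have outlined but not carried out.
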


In other words, the matrices of the forms are as follows:
\begin{equation} \begin{gathered}
\omega_0 = \left(%
\begin{array}{ccccc|cc} %
0 & E_{k_1} & & & & & \\ %
-E_{k_1} & 0 & & & & & \\ %
 &  & \ddots & & & &  \\ %
&  &  & 0 & E_{k_n} & & \\ %
&  &  & -E_{k_n} & 0 & & \\ %
\hline
&  &  & & & 0 & 1 \\ %
&  &  & & & -1 & 0 \\ %
\end{array} \right),\\ \omega_1 = \left(%
\begin{array}{ccccc|cc} %
0 & J_{k_1}(\lambda) & & & & 0 & \alpha_1 \\ %
-J^T_{k_1}(\lambda) & 0 & & & & 0 & \beta_1 +\delta \\ %
 &  & \ddots & & &  \vdots & \vdots  \\ %
&  &  & 0 & J_{k_n}(\lambda) & 0 & \alpha_n \\ %
&  &  & -J^T_{k_n}(\lambda) & 0 & 0 &  \beta_n \\ %
\hline
0 & 0 & \cdots &0 & 0 & 0 & \lambda \\ %
-\alpha_1^T &  -\beta_1^T - \delta^T & \cdots & -\alpha_n^T & -\beta_n^T & -\lambda & 0 \\ %
\end{array} \right),
\end{gathered} \end{equation}

where \begin{equation} \label{Eq:Turiel_FormsMatrixAddit} \alpha_s =
\left( \begin{matrix} \frac{3}{2} y_s^1
\\[0.3em] \frac{5}{2} y_s^2 \\[0.3em] \vdots \\[0.3em] (k_s + \frac{1}{2}) y_s^{k_s}
\end{matrix} \right), \qquad \beta_s = \left( \begin{matrix} \frac{1}{2} x_s^1
\\[0.3em] \frac{3}{2} x_s^2 \\[0.3em] \vdots \\[0.3em] (k_s - \frac{1}{2}) x_s^{k_s}
\end{matrix} \right), \qquad \delta = \left( \begin{matrix} 1
\\[0.3em] 0 \\[0.3em] \vdots \\[0.3em] 0
\end{matrix} \right)
\end{equation}

\begin{remark}

\begin{itemize}

\item Let us emphasize that the last coordinate is the eigenvalue $\lambda$.  Also note that $\frac{\partial}{\partial z}$ is the Hamiltonian vector field with Hamiltonian $\pm \lambda$ with respect to the form $\omega_0$.  (The sign $\pm$ depends on the sign convention in the definition of Hamiltonian vector fields.)

\item The term  $\alpha\wedge d \lambda$  makes the form $\omega_1$ closed.

\item The term  $d y^1_1 \wedge d\lambda$ is required for the Jordan--Kronecker invariants to be constant  in a  neighbourhood of the point $x_0$.
\end{itemize}
\end{remark}

\begin{corollary}
The field of endomorphisms $P=\omega_0^{-1} \omega_1$  is given by the formula \begin{equation} \label{T:Turiel_EndomField} \begin{gathered} P = \lambda E
+ \left( \sum_{s=1}^n \sum_{j=1}^{k_s} \left(
\frac{\partial}{\partial x_s^{j+1}} +
(j+\frac{1}{2})y_s^j\frac{\partial}{\partial z}
 \right) \otimes d x_s^j \right.  + \\
 \left. \left(\frac{\partial}{\partial y_s^{k_s -j}} + (k_s +\frac{1}{2}
-j)x_s^{k_s -j +1}\frac{\partial}{\partial z} + \delta^1_s
\delta^{k_s}_j \frac{\partial}{\partial z} \right)\otimes d y_s^{k_s
+1 -j} \right) + \\
\left( \sum_{s=1}^n \sum_{j=1}^{k_s} - ((j- \frac{1}{2}) x_s^{j} +
\delta_{s}^1 \delta_j^1) \frac{\partial}{\partial x_s^j} + (j+
\frac{1}{2} ) y_s^{j} \frac{\partial}{\partial y_s^j} \right)
\otimes d \lambda,
\end{gathered} \end{equation} where we formally assume $x_s^{i} = y_s^i =0$ if $i>k_s$ or $i\leq 0$. In other words, the matrix of this field of endomorphisms has the form  \begin{equation} \label{T:Turiel_EndomField_Matrix} P = \left(%
\begin{array}{ccccc|cc} %
J^T_{k_1}(\lambda) & 0 & & & & 0 & -\beta_1 - \delta \\ %
0 & J_{k_1}(\lambda)  & & & & 0 & \alpha_1 \\ %
 &  & \ddots & & &  \vdots & \vdots  \\ %
&  &  &  J^T_{k_n}(\lambda) & 0 & 0 & -\beta_n \\ %
&  &  & 0 & J_{k_n}(\lambda)  & 0 &  \alpha_n \\ %
\hline
\alpha_1^T &  \beta_1^T + \delta^T & \cdots & \alpha_n^T & \beta_n^T & \lambda & 0 \\ %
0 & 0 & \cdots &0 & 0 & 0 & \lambda \\ %
\end{array} \right),\end{equation} where the vectors $\alpha_i, \beta_i$ and  $\delta$ are  given by the formulas \eqref{Eq:Turiel_FormsMatrixAddit}.
\end{corollary}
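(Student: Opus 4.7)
The plan is a direct column-by-column calculation of $P = \omega_0^{-1}\omega_1$ from the Darboux-type expressions in Theorem \ref{T:Turiel_OneNonConst_LocalForm}. By definition, $PX$ is characterized by $\iota_{PX}\omega_0 = \iota_X\omega_1$ for every vector field $X$, so I can recover $P$ by contracting $\omega_1$ with each coordinate basis vector and then pulling the resulting $1$-form back to a vector via the inverse of the musical isomorphism of $\omega_0$.

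First, I would read off $\omega_0^{-1}$ from $\omega_0 = \sum_{s,i} dx^i_s\wedge dy^i_s + dz\wedge d\lambda$: direct contraction gives $\iota_{\partial/\partial x^i_s}\omega_0 = dy^i_s$, $\iota_{\partial/\partial y^i_s}\omega_0 = -dx^i_s$, $\iota_{\partial/\partial z}\omega_0 = d\lambda$ and $\iota_{\partial/\partial\lambda}\omega_0 = -dz$, so $\omega_0^{-1}$ sends $dy^i_s\mapsto\partial/\partial x^i_s$, $dx^i_s\mapsto -\partial/\partial y^i_s$, $d\lambda\mapsto\partial/\partial z$ and $dz\mapsto -\partial/\partial\lambda$. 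Then I split $\omega_1 = \lambda\omega_0 + \Theta$ where $\Theta := \sum_{s=1}^n\sum_{i=1}^{k_s-1}dx^i_s\wedge dy^{i+1}_s + \alpha\wedge d\lambda + dy^1_1\wedge d\lambda$, so that $P = \lambda E + \omega_0^{-1}\Theta$ and only $\omega_0^{-1}\Theta$ remains to be computed.

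Next, I contract $\Theta$ with each of $\partial/\partial x^j_s$, $\partial/\partial y^j_s$, $\partial/\partial z$ and $\partial/\partial\lambda$, using $\iota_X(\mu\wedge\nu)=(\iota_X\mu)\nu-(\iota_X\nu)\mu$, and then apply $\omega_0^{-1}$ to each resulting $1$-form. For instance, $\iota_{\partial/\partial x^j_s}\Theta = dy^{j+1}_s + (j+\tfrac12)y^j_s\,d\lambda$, which maps under $\omega_0^{-1}$ to $\partial/\partial x^{j+1}_s + (j+\tfrac12)y^j_s\,\partial/\partial z$, reproducing the first tensor summand of \eqref{T:Turiel_EndomField}. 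The computations for $\partial/\partial y^j_s$ and $\partial/\partial z$ are of the same character; the remaining case $X=\partial/\partial\lambda$ is the bookkeeping-heavy one, since $\iota_{\partial/\partial\lambda}\Theta = -\alpha - dy^1_1$, and expanding $-\omega_0^{-1}(\alpha) = \sum_{s,j}(j+\tfrac12)y^j_s\,\partial/\partial y^j_s - (j-\tfrac12)x^j_s\,\partial/\partial x^j_s$ together with $-\omega_0^{-1}(dy^1_1) = -\partial/\partial x^1_1$ yields exactly the $d\lambda$-column of \eqref{T:Turiel_EndomField}.

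The only real obstacle is notational: the reindexing $i = k_s+1-j$ used inside \eqref{T:Turiel_EndomField} to write the second tensor summand as $(\partial/\partial y^{k_s-j}_s + \dots)\otimes dy^{k_s+1-j}_s$ has to be tracked carefully, the half-integer coefficients coming from $\alpha$ must land in the correct columns, and the sign convention of $\omega_0^{-1}$ distinguishing $dx$- from $dy$-forms must be respected throughout. Once the tensorial formula \eqref{T:Turiel_EndomField} is established, the matrix presentation \eqref{T:Turiel_EndomField_Matrix} follows by inspection: the diagonal blocks $J^T_{k_s}(\lambda)$ and $J_{k_s}(\lambda)$ record the $\lambda$-scaling plus shifts on the $\partial/\partial x^j_s$- and $\partial/\partial y^j_s$-chains, the last row--column pair records $P(\partial/\partial z)=\lambda\,\partial/\partial z$ and $P(\partial/\partial\lambda)$, and the auxiliary vectors $\alpha_s$, $\beta_s$, $\delta$ from \eqref{Eq:Turiel_FormsMatrixAddit} collect the remaining mixed coefficients.
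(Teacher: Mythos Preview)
Your proposal is correct and is exactly the direct computation the paper has in mind: the corollary is stated without proof as an immediate consequence of the explicit local form of $\omega_0$ and $\omega_1$ in Theorem~\ref{T:Turiel_OneNonConst_LocalForm}, and your column-by-column contraction of $\omega_1=\lambda\omega_0+\Theta$ followed by application of $\omega_0^{-1}$ is precisely the matrix multiplication $\omega_0^{-1}\omega_1$ spelled out. The only remark is that the paper also records the block-matrix forms of $\omega_0$ and $\omega_1$, so one could equally well just multiply those matrices; your contraction approach and that matrix product are the same calculation in two notations.
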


It follows immediately from the formula \eqref{T:Turiel_EndomField} that 
\begin{equation} \begin{gathered} \label{Eq:NilOperator}
(P-\lambda E) \frac{\partial}{\partial x_s^j} = \frac{\partial}{\partial x_s^{j+1}} + (j+\frac{1}{2})y_s^j\frac{\partial}{\partial z} \\
(P-\lambda E) \frac{\partial}{\partial y_s^{k_s +1 -j}} =
\frac{\partial}{\partial y_s^{k_s -j}} +
(k_s +\frac{1}{2} -j)x_s^{k_s -j +1}\frac{\partial}{\partial z} + \delta^1_s \delta^{k_s}_j \frac{\partial}{\partial z} \\
(P - \lambda E) \frac{\partial}{\partial \lambda} =\sum_{s=1}^n
\left(\sum_{j=1}^{k_s} - ((j- \frac{1}{2}) x_s^{j} + \delta_{s}^1
\delta_j^1) \frac{\partial}{\partial x_s^j} + (j+ \frac{1}{2} )
y_s^{j} \frac{\partial}{\partial y_s^j} \right).
\end{gathered} \end{equation}
Using the formula \eqref{Eq:NilOperator} several times we get  \begin{equation} \label{Eq:NilOperator_Several}
(P-\lambda E)^p \frac{\partial}{\partial x_s^j} = (P-\lambda
E)\frac{\partial}{\partial x_s^{j+p-1}} = \frac{\partial}{\partial
x_s^{j+p}} + (j+p-\frac{1}{2})y_s^{j+p-1}\frac{\partial}{\partial z}
\end{equation} and  \begin{equation} \begin{gathered} (P-\lambda E)^p \frac{\partial}{\partial y_s^{k_s +1 -j}} =
(P-\lambda E) \frac{\partial}{\partial y_s^{k_s +2 -j -p}} = \\=
\frac{\partial}{\partial y_s^{k_s  +1 -j -p}} +
(k_s +\frac{3}{2} -j - p)x_s^{k_s +2-j -p}\frac{\partial}{\partial z} + \delta^1_s \delta^{k_s+1}_{j+p} \frac{\partial}{\partial z} \\
\end{gathered} \end{equation}

\textbf{Real case.} The case of one pair of complex conjugate eigenvalues is actually similar to the corresponding complex case since the almost complex structure from Lemma \ref{L:Complex_Str_in_Real_Jordan_Eigenspace} turns out to be integrable.

\begin{theorem}[F.~Turiel, \cite{Turiel94}] \label{T:Turiel_Real}
Let $(\omega_0, \omega_1)$ be a pair of compatible $2$-forms on a real manifold $M$ with one pair of complex conjugate
eigenvalues $\alpha \pm i \beta$.  Then the semisimple part $J$ of the operator $\frac{P - \alpha E }{\beta}$, where $P = \omega_0^{-1} \omega_1$, is a complex structure on $M$. Let $M^{\mathbb{C}}$ be the complexification of the manifold $M$ by the complex structure $J$. Then the forms $\omega_0^{\mathbb{C}}$ and $\omega_1^{\mathbb{C}}$  given by the formulas \begin{equation} \omega_0^{\mathbb C}(u, v) = \omega_0(u, v) - i \omega_0(u,
Jv), \quad \text{and} \quad \omega_1^{\mathbb C}(u, v) = \omega_1(u,
v) - i \omega_1(u, Jv) \end{equation}  are compatible holomorphic forms on $M^{\mathbb{C}}$. The operator $P$ commutes with the complex structure $J$, therefore it induces an operator $P^{\mathbb{C}}$ on $M^{\mathbb{C}}$. The equality $P^{\mathbb{C}}=(\omega_0^{\mathbb{C}})^{-1} \omega_1^{\mathbb{C}}$ holds, therefore the pair of forms $(\omega_0^{\mathbb{C}}, \omega_1^{\mathbb{C}})$
on $M^{\mathbb{C}}$ has only one eigenvalue $\lambda = \alpha + i \beta$.
\end{theorem}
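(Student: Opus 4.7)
My strategy is to dispatch the claims in the order stated: first construct $J$ as a smooth almost complex structure algebraically compatible with the forms, then establish its integrability (the only nontrivial step), and finally verify by formal computation that the complexified objects on $M^{\mathbb{C}}$ have the asserted properties. For the first task I define $J$ pointwise via Lemma~\ref{L:Complex_Str_in_Real_Jordan_Eigenspace}. Setting $R=(P-\alpha E)/\beta$ and $Q=R^{2}+E$, the operator $Q$ is smooth and nilpotent of bounded index on a neighbourhood of a regular point (since $R^{2}=-E+Q$ and the Jordan--Kronecker type is locally constant). Then $J=R\cdot(E-Q)^{-1/2}$, where $(E-Q)^{-1/2}$ is defined by the binomial series, which terminates because $Q$ is nilpotent; this gives $J^{2}=-E$ and makes $J$ smooth. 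Being a polynomial in $P$ with smooth coefficients, $J$ commutes with $P$ and is self-adjoint with respect to both $\omega_{0}$ and $\omega_{1}$.

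The main obstacle is the integrability of $J$. By Newlander--Nirenberg, it suffices to show that the $+i$-eigenbundle of $J$ inside $TM\otimes\mathbb{C}$ is involutive under the Lie bracket. This eigenbundle coincides with the generalized eigenbundle $E_{+}=\textnormal{Ker}(P-(\alpha+i\beta)E)^{N}$ of the complexified $P$ (for $N$ sufficiently large). Here I would invoke the classical Haantjes-type consequence of vanishing Nijenhuis: if $N_{P}=0$ and $P$ has smooth eigenvalues of constant multiplicity, then every generalized eigenbundle of $P$ is $P$-invariant and involutive. The verification proceeds by induction on the height of sections $X,Y\in E_{+}$, using the identity $N_{P}(X,Y)=0$ together with the smoothness of $\alpha,\beta$ to propagate involutivity from the ordinary eigenbundle to the full generalized eigenbundle. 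Applied to $E_{+}$, this gives $N_{J}=0$ and hence the integrability of $J$.

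Once $J$ is integrable, the remaining claims are formal. Using self-adjointness of $J$ with respect to $\omega_{i}$ and $J^{2}=-E$, one computes
\begin{equation*}
\omega_{i}^{\mathbb{C}}(Ju,v)=\omega_{i}(Ju,v)-i\omega_{i}(Ju,Jv)=\omega_{i}(u,Jv)+i\omega_{i}(u,v)=i\omega_{i}^{\mathbb{C}}(u,v),
\end{equation*}
so each $\omega_{i}^{\mathbb{C}}$ is of type $(2,0)$ relative to $J$. Up to a factor, $\omega_{i}^{\mathbb{C}}$ equals the restriction to $T^{(1,0)}M$ of the complex-linear extension of $\omega_{i}$, so $d\omega_{i}=0$ together with $N_{J}=0$ yields $d\omega_{i}^{\mathbb{C}}=0$, and a closed form of pure type $(2,0)$ is automatically $\bar\partial$-closed, i.e.\ holomorphic. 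Non-degeneracy of $\omega_{0}^{\mathbb{C}}$ is inherited from that of $\omega_{0}$, and $N_{P^{\mathbb{C}}}=0$ from $N_{P}=0$ by holomorphic extension, so $(\omega_{0}^{\mathbb{C}},\omega_{1}^{\mathbb{C}})$ is compatible. Since $P$ commutes with $J$, it descends to a holomorphic endomorphism $P^{\mathbb{C}}$, and the identity $P^{\mathbb{C}}=(\omega_{0}^{\mathbb{C}})^{-1}\omega_{1}^{\mathbb{C}}$ follows pointwise from $\omega_{1}(u,v)=\omega_{0}(Pu,v)$ and the defining formulas. On $T^{(1,0)}M$ the operator $P$ acts as $\alpha+i\beta$ plus a nilpotent summand, so $\alpha+i\beta$ is the unique eigenvalue of $P^{\mathbb{C}}$.
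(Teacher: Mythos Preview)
The paper does not actually prove Theorem~\ref{T:Turiel_Real}; it is quoted as a result of Turiel \cite{Turiel94}, and the only related argument the paper supplies is the pointwise linear-algebraic Lemma~\ref{L:Complex_Str_in_Real_Jordan_Eigenspace}. So there is no ``paper's own proof'' to compare your proposal against beyond that lemma.

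Judged on its own merits, your outline is sound. The explicit formula $J=R(E-Q)^{-1/2}$ with $Q=R^{2}+E$ nilpotent is a clean way to exhibit the semisimple part as a polynomial in $P$ with smooth coefficients near a regular point; it reproduces exactly what Lemma~\ref{L:Complex_Str_in_Real_Jordan_Eigenspace} asserts and adds the smoothness you need. Your identification of the $(+i)$-eigenbundle of $J$ with the generalized $(\alpha+i\beta)$-eigenbundle of the complexified $P$ is correct, and the involutivity of the latter when $N_{P}=0$ is precisely the mechanism underlying Turiel's splitting theorem (Theorem~\ref{T:CompSymp_Characteristic_Decomposition} here), so invoking it is legitimate; the inductive ``Haantjes-type'' argument you sketch is the standard way to establish this directly. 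The remaining verifications (type $(2,0)$, closedness, holomorphy, non-degeneracy, $P^{\mathbb{C}}=(\omega_{0}^{\mathbb{C}})^{-1}\omega_{1}^{\mathbb{C}}$, single eigenvalue) are indeed formal once $J$ is integrable and self-adjoint, and your computations are correct.

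One small point worth making explicit: the theorem as stated in the paper does not restrict to a neighbourhood of a regular point, but your construction of $J$ (boundedness of the nilpotent index, termination of the binomial series) tacitly uses local constancy of the Jordan type. This is consistent with how the paper uses the theorem, but you should flag the regularity hypothesis when you write it up.
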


\begin{remark} Under the conditions of Theorem \ref{T:Turiel_Real} if we take the complex structure  $-J$ instead of the complex
structure $J$ the complexification of the pair of forms will have the eigenvalue $\alpha - i \beta$. \end{remark}

\section{Proof of the main theorems}
\label{SubS:Proof_Main_Inv_Fol}

Theorem \ref{T:Eigenvalue_Strong_Decomposition} is a direct consequence of Theorem \ref{T:CompSymp_Characteristic_Decomposition}. Theorem \ref{T:InvDistEigenDecomp}  follows from Theorems
\ref{T:Eigenvalue_Strong_Decomposition} and \ref{T:Linear_Inv_Decomp}. Thus, it remains only to prove Theorem
\ref{T:IntInvSubs_NonConstEigen}. The case of a pair of complex conjugate eigenvalues can be reduced to the case of one eigenvalue using Theorems \ref{T:InvSubs_Real} and \ref{T:Turiel_Real}, therefore it suffices to prove Theorem \ref{T:IntInvSubs_NonConstEigen} only in the case when there is only one eigenvalue. This entire section is devoted to this.

Regular non-critical points form an open dense set, therefore it suffices to prove Theorem \ref{T:IntInvSubs_NonConstEigen}  just for them. The case when the eigenvalues are constant (i.e. when the non-degenerate bi-Hamiltonian structure is flat) is trivial. Thus, it remains to consider the case of a non-constant eigenvalue without critical points (i.e. the case of Theorem \ref{T:Turiel_OneNonConst_LocalForm}). The proof is by direct computation. Let us  explicitly describe each invariant distribution and directly check it involutivity (the formula for vector fields generating the distributions is given in Lemma \ref{L:InvFol_Formula}).

First, let us describe the local frame \begin{equation} \label{Eq:OneEigenNonConst_Basis}  e^0_1, e^1_1 \dots,
e^{k_1}, f_1^0, \dots, f_1^{k_1} , e_2^1, \dots, e_n^{k_n}, f_n^1,
\dots, f_n^{k_n}\end{equation} in a  neighbourhood of $x_0$ in which the matrices of the forms $\omega_0$ and $\omega_1$ consist of Jordan blocks. The basis for smaller blocks (i.e. Jordan $k_2$, \dots, $k_n$-blocks):
\begin{equation} \label{Eq:OneEigenNonConst_Basis_Small} \begin{gathered} e_s^1 = \frac{\partial}{\partial x_s^1} -
\frac{(k_s + \frac{1}{2}) y_s^{k_s}}{(\frac{1}{2} x_1^1 +1)}
\frac{\partial} {\partial y_1^{k_s}}, \\ e_s^i = (P - \lambda
E)^{i-1} e_s^1 = \frac{\partial}{\partial x_s^i} - \frac{(k_s +
\frac{1}{2}) y_s^{k_s}}{(\frac{1}{2} x_1^1 +1)} \frac{\partial}
{\partial
y_1^{k_s-i+1}} + \alpha_s^i \frac{\partial}{\partial z},\\
f_s^{k_s} = \frac{\partial}{\partial y_s^1} - \frac{\frac{1}{2}
x_s^1}{(\frac{1}{2} x_1^1 +1)} \frac{\partial} {\partial y_1^{k_s}},
\\ f_s^i = (P - \lambda E)^{k_s-i} f_s^{k_s} = \frac{\partial}{\partial
y_s^i} - \frac{\frac{1}{2} x_s^1}{(\frac{1}{2} x_1^1 +1)}
\frac{\partial} {\partial y_1^{i}} + \beta_s^i
\frac{\partial}{\partial z},
\end{gathered} \end{equation} where \begin{equation} \begin{gathered} \alpha_s^i = (i-\frac{1}{2}) y_s^{i-1}
- \frac{(k_s + \frac{1}{2}) y_s^{k_s}}{(\frac{1}{2} x_1^1 +1)} ((k_s
-i + \frac{3}{2})x_1^{k_s -i +2} + \delta_i^{k_s +1})
\\ \beta_s^i = (i + \frac{1}{2}) x_s^{i+1}
- \frac{\frac{1}{2} x_s^1}{(\frac{1}{2} x_1^1 +1)}
((i+\frac{1}{2})x_1^{i+1} + \delta_0^{i})
\end{gathered} \end{equation}

The basis for the biggest  $(k_1+1)$-block: \begin{equation}
\label{Eq:OneEigenNonConst_Basis_Big}
\begin{gathered} e_1^0 = -\frac{\partial}{\partial \lambda} +
\sum_{s=1}^n \left( \sum_{j=1}^{k_s} - (j+ \frac{1}{2}) x_s^{j+1}
\frac{\partial}{\partial x_s^j} + (j- \frac{1}{2} ) y_s^{j-1}
\frac{\partial}{\partial y_s^j} \right) \\
e_1^i = (P - \lambda E)^{i} e_1^0 =  \sum_{s=1}^n \left( \left(
\frac{1}{2} x^1_s + \delta_s^1 \right) \frac{\partial} {\partial
x_s^{i}} - \left( k_s + \frac{1}{2} \right) y_s^{k_s}
\frac{\partial} {\partial y_s^{k_s-i+1}}\right) + \gamma_i
\frac{\partial} {\partial z} \\
f_1^{k_1} =\frac{1}{(\frac{1}{2} x_1^1 +1)} \frac{\partial}
{\partial y_1^{k_1}},
\\
f_1^i = (P - \lambda E)^{k_1-i} f_1^{k_1} = \frac{1}{(\frac{1}{2}
x_1^1 +1)} \frac{\partial} {\partial y_1^{i}} + \beta_1^i
\frac{\partial}{\partial z}, \\ f_1^0 = \frac{\partial}{\partial z}
\end{gathered} \end{equation}where \begin{equation}
\begin{gathered}\gamma_i =  \sum_{s=1}^n (\frac{1}{2}x_s^1 +
\delta_s^1)(i-\frac{1}{2}) y_s^{i-1} - (k_s + \frac{1}{2}) y_s^{k_s}(k_s - i + \frac{3}{2}) x_s^{k_s - i+2} \\
\beta_1^i = \frac{1}{\frac{1}{2} x_1^1 +1} \left(
\left(i+\frac{1}{2} \right)x_1^{i+1} + \delta_0^{i} \right).
\end{gathered} \end{equation} Here we formally assume that $x_s^{i}
= y_s^i =0$ if $i>k_s$ or $i\leq 0$. In particular, $\gamma_1 =0$.

The vector fields \eqref{Eq:OneEigenNonConst_Basis_Small} and \eqref{Eq:OneEigenNonConst_Basis_Big}  were chosen so that the following assertion holds.

\begin{assertion}
Under the conditions of Theorem \ref{T:Turiel_OneNonConst_LocalForm} in the local
reference frame \eqref{Eq:OneEigenNonConst_Basis} given by the formulas
\eqref{Eq:OneEigenNonConst_Basis_Small} and
\eqref{Eq:OneEigenNonConst_Basis_Big} the matrices of the forms  $\omega_0$ and $\omega_1$ have a block diagonal form {\footnotesize
\begin{equation} \omega_1= \left(\begin{matrix}
A_1 &     &        &      \\
    & A_2 &        &      \\
    &     & \ddots &      \\
    &     &        & A_k  \\
\end{matrix} \right),
\qquad  \omega_0= \left(\begin{matrix}
B_1 &     &        &      \\
    & B_2 &        &      \\
    &     & \ddots &      \\
    &     &        & B_k  \\
\end{matrix} \right),\end{equation}} where $A_1, B_1$ form a Jordan $(k_{1}+1)$-block, and $A_i, B_i$ are Jordan $k_i$-blocks for $i>1$. \end{assertion}

The structure of invariant subspaces (see Remark \ref{Rem:InvSubsJord_Heights}) implies the following statement.

\begin{lemma} \label{L:InvFol_Formula}
Under the conditions of Theorem \ref{T:IntInvSubs_NonConstEigen}  in the basis 
\eqref{Eq:OneEigenNonConst_Basis}  given by the formulas
\eqref{Eq:OneEigenNonConst_Basis_Small} and \eqref{Eq:OneEigenNonConst_Basis_Big} the invariant distributions have the form\begin{equation}\label{Eq:InvFol_Formula}
\begin{gathered} U^{m_1 + 1} (V_J^{k_1 +1}) \oplus \dots \oplus
U^{m_N}(V_J^{k_N}) = \langle \frac{\partial}{\partial z},
\frac{\partial}{\partial y^1_1}, \dots, \frac{\partial}{\partial
y^{m_1}_1},
e_1^{k_1}, \dots, e_1^{k_1 - m_1} \rangle \oplus \\
\bigoplus_{s=2}^N \langle \frac{\partial}{\partial x_s^{k_s}},
\dots, \frac{\partial}{\partial x_s^{k_s -m_s +2}},
\frac{\partial}{\partial x_s^{k_s - m_s +1}} - \frac{(k_s +
\frac{1}{2}) y_s^{k_s}}{(\frac{1}{2} x_1^1 +1)}
\frac{\partial}{\partial y_1^{m_s}}, \\
\frac{\partial}{\partial y_s^{1}}, \dots, \frac{\partial}{\partial
y_s^{m_s -1}}, \frac{\partial}{\partial y_s^{m_s}} -
\frac{\frac{1}{2} x_s^{1}}{(\frac{1}{2} x_1^1 +1)}
\frac{\partial}{\partial y_1^{m_s}} \rangle.
\end{gathered} \end{equation}

\end{lemma}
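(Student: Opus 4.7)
The lemma is a bookkeeping translation of the pointwise linear classification of invariant subspaces (Remark~\ref{Rem:InvSubsJord_Heights}) into the explicit coordinate frame~\eqref{Eq:OneEigenNonConst_Basis}. The plan is to invoke the preceding Assertion first, which guarantees that at every point of the neighbourhood of $x_0$ the pair $(\omega_0, \omega_1)$ is block-diagonal in that frame, with one Jordan $(k_1+1)$-block coming from the chain $(e_1^0,\dots,e_1^{k_1}, f_1^0,\dots,f_1^{k_1})$ and Jordan $k_s$-blocks from $(e_s^1,\dots,e_s^{k_s}, f_s^1,\dots,f_s^{k_s})$ for $s \geq 2$. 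Consequently Remark~\ref{Rem:InvSubsJord_Heights} applies pointwise, and every invariant distribution has the form $U^{m_1+1}(V_J^{k_1+1}) \oplus \bigoplus_{s\geq 2} U^{m_s}(V_J^{k_s})$ for integer heights satisfying the inequalities~\eqref{Eq:InvSubsJord_Heights_Conditions}.

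Next I would read off chain-basis generators of each $U^m$-summand. The relations $(P-\lambda E)e_s^i = e_s^{i+1}$ and $(P-\lambda E)f_s^i = f_s^{i-1}$, with chains terminating at $e_s^{k_s}$ and at $f_s^1$ (respectively $f_1^0$ for the big block), imply that within a single Jordan block $U^m$ is spanned by the last $m$ vectors of the $e$-chain together with the first $m$ vectors of the $f$-chain. Thus
\[
U^{m_1+1}(V_J^{k_1+1}) = \langle e_1^{k_1-m_1},\dots,e_1^{k_1},\, f_1^0,\dots,f_1^{m_1}\rangle,
\]
and $U^{m_s}(V_J^{k_s}) = \langle e_s^{k_s-m_s+1},\dots,e_s^{k_s},\, f_s^1,\dots,f_s^{m_s}\rangle$ for $s \geq 2$.

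The remaining step is to substitute the explicit formulas~\eqref{Eq:OneEigenNonConst_Basis_Small}--\eqref{Eq:OneEigenNonConst_Basis_Big} and simplify. In the big block, $f_1^0 = \partial/\partial z$ and each $f_1^i$ is a nonzero multiple of $\partial/\partial y_1^i$ modulo $\partial/\partial z$, so the $f$-part of the big-block summand collapses to $\langle \partial/\partial z, \partial/\partial y_1^1,\dots,\partial/\partial y_1^{m_1}\rangle$. For $s\geq 2$, each $e_s^i$ equals $\partial/\partial x_s^i$ plus a multiple of $\partial/\partial y_1^{k_s-i+1}$ modulo $\partial/\partial z$, and each $f_s^i$ equals $\partial/\partial y_s^i$ plus a multiple of $\partial/\partial y_1^i$ modulo $\partial/\partial z$. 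The height inequality $m_s \leq m_1 + 1$ from~\eqref{Eq:InvSubsJord_Heights_Conditions} ensures that the auxiliary indices $1,\dots,m_s-1$ all lie in the range $\{1,\dots,m_1\}$, so the corresponding $\partial/\partial y_1^j$ corrections already belong to the big-block summand and can be subtracted off, leaving the bare coordinate fields $\partial/\partial x_s^i$ and $\partial/\partial y_s^i$ that appear in~\eqref{Eq:InvFol_Formula}. Only the extremal index $j = m_s$ may exceed $m_1$ (precisely when $m_s = m_1+1$), which is why the two corrected generators in~\eqref{Eq:InvFol_Formula} must be retained.

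The main obstacle is purely notational: one must carefully track which $\partial/\partial y_1^j$ fields are already present in the big-block contribution and use the inequalities~\eqref{Eq:InvSubsJord_Heights_Conditions} to see that the sole obstruction occurs at the boundary $j = m_s$. Once this is verified, the dimension count $2(m_1+1) + \sum_{s\geq 2} 2m_s$ matches on both sides, and the spanning set displayed in the lemma coincides with the claimed invariant distribution.
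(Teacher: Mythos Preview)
Your proposal is correct and follows exactly the route the paper indicates: the paper simply states that the lemma is implied by the structure of invariant subspaces in Remark~\ref{Rem:InvSubsJord_Heights}, and you have spelled out the translation from the abstract chain description to the coordinate frame~\eqref{Eq:OneEigenNonConst_Basis}. Your use of the height inequality $m_s\le m_1+1$ to justify dropping the $\partial/\partial y_1^j$ corrections for $j\le m_s-1$ while retaining them at $j=m_s$ is precisely the bookkeeping the paper omits.
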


\begin{proof}[Proof of Theorem \ref{T:IntInvSubs_NonConstEigen}]
To prove Theorem \ref{T:IntInvSubs_NonConstEigen} it remains to check involutive of the distribution \eqref{Eq:InvFol_Formula}. This is done by direct calculation --- all we need to do is to find the commutator of the vector fields that define the distribution \eqref{Eq:InvFol_Formula} and check whether they are tangent to this distribution.

The distributions $\textnormal{Ker}(P-\lambda E)^{k_i}$, where $i>1$,  are non-integrable since the commutator of the vector fields \begin{gather} u_s =
\frac{\partial}{\partial x_s^{k_s - m_s +1}} - \frac{(k_s +
\frac{1}{2}) y_s^{k_s}}{(\frac{1}{2} x_1^1 +1)}
\frac{\partial}{\partial y_1^{m_s}},\\ v_s= \frac{\partial}{\partial
y_s^{m_s}} - \frac{\frac{1}{2} x_s^{1}}{(\frac{1}{2} x_1^1 +1)}
\frac{\partial}{\partial y_1^{m_s}} \end{gather}  is not tangent to the distribution. Indeed, \begin{equation} [u_s, v_s] = \delta^{k_s}_{m_s}
\left(\frac{k_s}{(\frac{1}{2}x_1^1 + 1)}\frac{\partial}{\partial
y_1^{k_s}}\right).\end{equation}  It is not hard to verify that the vector fields $\frac{\partial } {\partial y_1^{m_s}}$  are tangent to the invariant distribution \eqref{Eq:InvFol_Formula}  if and only if $m_1 \geq m_s$. For the distributions $\textnormal{Ker}(P-\lambda E)^{k_i}$, where $i>1$, we have $m_1 = m_s-1<  m_s = k_s$. Therefore, in these cases the commutator $[u_s, v_s]$ is not tangent to $\textnormal{Ker}(P-\lambda E)^{k_i}$.

The proof that all invariant distributions except for $\textnormal{Ker}(P-\lambda E)^{k_i}$, where $i>1$, are integrable can be slightly simplified by using the following considerations. Note that \[ [e^m_1, \frac{\partial }{\partial x_s^i}] \in \langle
\frac{\partial }{\partial z} \rangle, \qquad [e^m_1, \frac{\partial
}{\partial y_s^{k_s -i}}] \in \langle \frac{\partial }{\partial z}
\rangle \] for $i>1$ and that the vector fields $\frac{\partial }{\partial z}$ belong to any invariant distribution (except for the trivial zero-dimensional distribution). Thus the only non-trivial commutators of vector fields from the formula \eqref{Eq:InvFol_Formula} which may not be tangent to the distribution are the pairwise commutators of the vector fields $e_1^0, e_1^1, u_s$ and
$v_s$.  It follows from the structure of invariant subspaces (see Remark \ref{Rem:InvSubsJord_Heights}) that the only invariant distribution containing the vector field $e_1^0$ coincides with the whole tangent bundle. It also follows from Remark \ref{Rem:InvSubsJord_Heights} that the only invariant distribution containing the vector field $e_1^1$ which is different from the whole tangent bundle and from $\textnormal{Ker}(P-\lambda E)^{k_i}$, where $i>2$, is the distribution $\textnormal{Im} (P-\lambda E)$. Let us prove separately the involutivity of this distribution.

\begin{assertion}
Under the conditions of Theorem \ref{T:Turiel_OneNonConst_LocalForm} the distribution $\textnormal{Im} (P-\lambda E)$  is involutive. \end{assertion}
\begin{proof}
Since $N_P =0$ , the following equality holds \begin{equation}
\begin{gathered} \left[(P-\lambda E)u, (P-\lambda E)v \right] =(P-\lambda E)
([u, Pv] + [Pu, v] - (P+\lambda E)[u, v]) +
\\ (\mathcal{L}_{(P-\lambda E) v} \lambda) u
-(\mathcal{L}_{(P-\lambda E) u} \lambda) v
\end{gathered} \end{equation} The distribution $\textnormal{Im} (P-\lambda E)$ is involutive because $\mathcal{L}_{(P-\lambda E)u} \lambda =0$ for any vector field $u$. \end{proof}

Thus Theorem \ref{T:IntInvSubs_NonConstEigen} is completely proved. \end{proof}

\end{document}